\newcommand{\essinf}{\textnormal{ess} \inf}
\newcommand{\esssup}{\textnormal{ess} \sup}
\newcommand{\diam}{\textnormal{diam}}
\newcommand{\rank}{\textnormal{rank}}
\newcommand{\Var}{\textnormal{Var}}
\newcommand{\muv}{\boldsymbol{\mu}}
\newtheorem{theorem}{Theorem}[section]
\newtheorem{proposition}{Proposition}
\newtheorem{definition}{Definition}
\newtheorem{assumption}{Assumption}
\newtheorem{remark}{Remark}
\newtheorem*{extheorem}{Theorem}
\newcommand{\nicola}[1]{#1}
\appto{\bibsetup}{\sloppy}
\title{Error estimates for POD-DL-ROMs: a deep learning framework for reduced order modeling of nonlinear parametrized PDEs enhanced by proper orthogonal decomposition}
\author{Simone Brivio$^1$, Stefania Fresca$^1$, Nicola R. Franco$^1$, Andrea Manzoni$^1$\vspace{4mm} \\ \normalsize{$^1$\textit{MOX - Dipartimento di Matematica, Politecnico di Milano,}}\vspace{-0.8mm} \\ \normalsize{\textit{P.zza Leonardo da Vinci 32, I-20133 Milano, Italy}}}
\date{}
\begin{document}

\maketitle

\begin{abstract}
POD-DL-ROMs have been recently proposed as an extremely versatile strategy to build accurate and reliable reduced order models (ROMs) for nonlinear parametrized partial differential equations, combining {\em (i)} a preliminary dimensionality reduction obtained through proper orthogonal decomposition (POD) for the sake of efficiency,  {\em (ii)} an autoencoder  architecture that further reduces the dimensionality of the POD space to a handful of latent coordinates, and {\em (iii)} a dense neural network to learn the map that describes the dynamics of the latent coordinates as a function of the input parameters and the time variable.  Within this work, we aim at justifying the outstanding approximation capabilities of POD-DL-ROMs by means of a thorough error analysis, showing how the sampling required to generate training data, the dimension of the POD space, and the complexity of the underlying neural networks, impact on the solution accuracy. This decomposition, combined with the constructive nature of the proofs, allows us to formulate practical criteria to control the relative error in the approximation of the solution field of interest, and derive general error estimates. Furthermore, we show that, from a theoretical point of view, POD-DL-ROMs outperform several deep learning-based techniques in terms of model complexity. Finally, we validate our findings by means of suitable numerical experiments, ranging from parameter-dependent operators analytically defined to several parametrized PDEs.
\end{abstract}

\begin{small}
\begin{flushleft} \textbf{Keywords} Operator Learning, Neural Networks, Approximation bounds, Reduced order modeling, parametrized PDEs, deep learning-based reduced order modeling
\end{flushleft}
\end{small}

\section{Introduction}\label{sec1}

Solutions to partial differential equations (PDEs) are not usually available in analytic form and need to be approximated by suitable \textit{high-fidelity} methods, such as the Finite Element Method (FEM) \cite{Quarteroni2017,Quarteroni2014}. The latter usually entails a suitable spatial discretization of the (bounded, compact) computational domain $\Omega \subset \mathbb{R}^d$, $d=1, 2, 3$, regulated by the step size $h>0$ and yielding a set of $N_h$ degrees of freedom, that in some cases might correspond to the vertices of the elements providing the domain discretization. 
\textit{High-fidelity} methods are usually referred to as full order models (FOMs) as they provide very accurate solutions, however resulting in computationally demanding strategies in terms of either time or resources. Within this work, we focus on a parametric setting, where in general the PDE solution $u$ depends not only on the spatial coordinate ${x} \in \Omega$ and the time variable $t \in \mathcal{T} = [0,T]$, but also on a parameter vector $\muv \in \mathcal{P}$ -- being the parameter space $\mathcal{P} \subset \mathbb{R}^{p}$ a compact set --  namely $u = u(x,\muv,t)$. Once the problem has been discretized in space, we aim at exploring the solution manifold $\mathcal{S}_{N_h} = \{\mathbf{u}(\muv,t) = [u(x_i,\muv,t)]_{i=1}^{N_h} \in \mathbb{R}^{N_h}: (\muv,t) \in \mathcal{P} \times \mathcal{T}\}$, evaluating the problem solution in multiple scenarios, for different parameter values. To carry out this task efficiently, as well as to tackle other {\em multi-query} tasks such as those involving Uncertainty Quantification and to perform real-time numerical simulations, FOMs must be replaced by efficient and reliable reduced order models (ROMs), a wide class of strategies providing very efficient results yet retaining an adequate representation of the solution manifold $\mathcal{S}_{N_h}$.

Linear projection-based ROMs, such as the reduced basis (RB) method relying on either greedy algorithms or the Proper Orthogonal Decomposition (POD) to build a low-dimensional linear trial subspace, are widely used in the context of parametrized PDEs. Usually relying on a (Petrov-)Galerkin projection to generate the corresponding ROM by enforcing at the reduced order level the physical constraints expressed by the FOM, these strategies feature however  several drawbacks, especially when dealing with time-dependent, nonlinear, and nonaffine problems, ultimately requiring suitable \textit{hyper-reduction} strategies such as the Empirical Interpolation Method (EIM) \cite{eim_2004,Farhat2020,Manzoni2016} or the Discrete EIM (DEIM, \cite{deim_2009}). Despite being very general, and widely applied, hyper-reduction techniques usually feature an intrusive nature, require to handle algebraic arrays extracted from the FOM, ultimately resulting in overwhelming computational costs when dealing with nonlinear time-dependent parametrized PDEs. 

To overcome these limitations, data-driven Deep Learning-based ROMs (DL-ROMs) were recently proposed in \cite{franco2023deep,dlrom_2021} and similar works \cite{lee_carlberg_2020,mucke_2021,Pant_2021,zhu_2018} to exploit the power of DNNs to both perform dimensionality reduction of a set of high-dimensional snapshots data (obtained by sampling the solution manifold) and learn parameter-to-solution maps nonintrusively. Unfortunately, these techniques require to train complex architectures and might become unfeasible to train as soon as the FOM dimension $N_h$ increases, suffering from the \textit{curse of dimensionality}  in their {\em vanilla} version. To counter this issue, POD-DL-ROMs were then introduced in \cite{pod_dl_rom_2022}, leveraging on the power of DL-ROMs and the physically-consistent dimensionality reduction achieved through POD, and then training a DL-ROM network using FOM data projected on a (possibly, large dimensional) POD space: overall, POD-DL-ROMs are capable of lower training efforts in terms of both memory storage and computational time. The POD-DL-ROM paradigm has been tested against several problems, showing remarkable approximation capabilities in the numerical simulation of, e.g., fluid flows and fluid-structure interaction problems \cite{POD-DL-ROM-fluids,pod_dl_rom_2022}, cardiac electrophysiology \cite{fresca2021pod}, and micro-electromechanical systems \cite{fresca2022deep} among others.  

However, a thorough numerical analysis of the POD-DL-ROM technique -- connecting, e.g., the complexity of the NN architectures involved in a POD-DL-ROM, the sampling error entailed by the selection of training data, the POD error generated while projecting those data onto a POD space, with the overall accuracy of the computed solution -- is still lacking.
Within this work, we aim at addressing these questions in light of a solid theoretical analysis, providing general error estimates for the POD-DL-ROM technique, assessing their validity in a series of numerical experiments involving different parametrized problems.

\subsection{Literature review and existing results}
\label{sec:1_1}

Thanks to the flourishing and rapidly evolving literature of Approximation Theory, many Deep Learning-based approaches to reduced order modeling are now being justified with rigorous theoretical results and error estimates. The majority of these are grounded on a notorious result by Yarotski (2017) \cite{yarotsky2017}, which we report below.
\begin{extheorem}[Yarotski \cite{yarotsky2017}]
    Let $b \in \mathbb{N}$, $b\ge1$ and $0<\varepsilon<1/2$. Any $f \in W^{s,+\infty}([0,1]^{b})$ can be approximated uniformly with an error of at most $\varepsilon$ by a ReLU Deep Neural Network (DNN) having at most $c \log(1/\varepsilon)$ layers and $c \varepsilon^{-b/s} \log(1/\varepsilon)$ weights, where $c = c(s,b,f)$ is a constant.
\end{extheorem}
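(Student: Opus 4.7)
The plan is to combine two classical ingredients: (i) a local polynomial approximation of Sobolev functions on a grid, and (ii) an efficient emulation of polynomial arithmetic by ReLU networks. First I would partition $[0,1]^b$ into $N^b$ cubes of side $1/N$, with $N$ to be fixed later. On each cube centered at a grid point $x_\nu$ I would take the Taylor polynomial $P_\nu$ of $f$ of degree $s-1$ anchored at $x_\nu$; by a Bramble--Hilbert type estimate, since $f\in W^{s,\infty}$, the local error is of order $N^{-s} |f|_{W^{s,\infty}}$. To glue the $P_\nu$ into a single global function I would introduce a partition of unity $\{\phi_\nu\}$ subordinate to the grid, where each $\phi_\nu$ is a product of $b$ one-dimensional hat functions $\psi(N x_i - \nu_i)$. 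The global surrogate
\[
\tilde f(x) \;=\; \sum_{\nu} \phi_\nu(x)\, P_\nu(x)
\]
then satisfies $\|f - \tilde f\|_{L^\infty} \lesssim N^{-s}$.

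Next, I would realize $\tilde f$ by a ReLU DNN. Hat functions are piecewise linear and thus exactly representable with a constant number of ReLU units. The nontrivial step is to emulate the products appearing in each summand. Here I would invoke Yarotsky's key lemma: the map $x \mapsto x^2$ on $[0,1]$ can be approximated to accuracy $2^{-2L}$ by a ReLU network with $O(L)$ layers and $O(L)$ weights, via the iterated sawtooth construction. Multiplication of two bounded numbers then follows from the polarization identity $xy = \tfrac{1}{2}\bigl((x+y)^2 - x^2 - y^2\bigr)$, yielding a binary multiplication gadget $\widetilde{\times}$ of accuracy $\delta$ with depth and weight count $O(\log(1/\delta))$. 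Chaining such gadgets, one can approximate any product of at most $b+s$ factors, hence each term $\phi_\nu(x) P_\nu(x)$.

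Assembling the network by running the local gadgets in parallel and summing over $\nu$, the complexity count proceeds as follows. Balancing the polynomial error $N^{-s}$ with the target $\varepsilon$ dictates $N \asymp \varepsilon^{-1/s}$. Choosing the per-product accuracy as $\delta \asymp \varepsilon/N^b$ keeps the cumulative arithmetic error below $\varepsilon$, while contributing a factor $O(\log(1/\varepsilon))$ to the depth. Since at any point $x$ only $O(2^b)$ of the $\phi_\nu$ are nonzero, the total active weight count is $O(N^b \log(1/\varepsilon)) = O(\varepsilon^{-b/s}\log(1/\varepsilon))$, and the depth is dominated by the multiplication subnetworks, i.e.\ $O(\log(1/\varepsilon))$, with constants absorbing the dependence on $s$, $b$, and $f$.

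The main obstacle I anticipate is the careful bookkeeping of error propagation: each approximate multiplication introduces a $\delta$-error that must be tracked through a composition of depth $O(\log s)$ and then summed over the $O(2^b)$ cells that overlap at a point, all while ensuring the final bound remains $\varepsilon$. A secondary subtlety is guaranteeing that the Taylor coefficients of $P_\nu$ (which depend on $f$) can be hard-coded as network weights without altering the architectural complexity; this is handled by absorbing all $f$-dependent quantities into the constant $c=c(s,b,f)$.
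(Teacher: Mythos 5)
The paper does not prove this statement: it is quoted verbatim as an external result from Yarotsky (2017), so there is no internal proof to compare against. Your outline faithfully reproduces Yarotsky's original argument --- grid-based partition of unity from products of hat functions, local Taylor polynomials of degree $s-1$ with $O(N^{-s})$ error, the sawtooth approximation of $x\mapsto x^2$, multiplication via the polarization identity, and the balancing $N\asymp\varepsilon^{-1/s}$ yielding $O(\varepsilon^{-b/s}\log(1/\varepsilon))$ weights and $O(\log(1/\varepsilon))$ depth --- and is correct as a proof sketch of the cited theorem.
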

Indeed, this result and its subsequent generalizations, see e.g. \cite{yarotsky_2018,guhring_2021}, constitute the foundation of many recent works, for instance:
\begin{itemize}
    \item[(i)] in \cite{dlrom_bounds}, the authors exploited these results to formulate an error analysis for general DL-ROMs. However, their analysis is limited to the time-independent case and does not resolve the \textit{curse of dimensionality}, as it binds the complexity of DL-ROMs linearly with the FOM dimension $N_{h}$;
    
    \item[(ii)] Yarotski's Theorem was also considered in \cite{appx_convenet_2022}, where the authors investigated the approximation capabilities of Convolutional Neural Networks (CNNs), suggesting a strong connection between these architectures and the Fourier transform;
    
    \item [(iii)] similarly, the results in \cite{yarotsky2017} are fundamental for the derivation of the approximation bounds reported in \cite{deeponets_bounds_2021}, which, instead, concern the DeepONet paradigm, an approach first proposed by Lu et al. in \cite{lu2021learning};

    \item[(iv)] finally, Yarotski's Theorem and its generalizations were also employed to derive approximation bounds for deep learning-based ROM strategies that couple POD and feedforward neural networks, see, e.g., \cite{bhattacharya_pod}.
\end{itemize}
Here, we aim at proposing a similar analysis for POD-DL-ROMs, emphasizing the main differences between this approach and the existing literature.

\subsection{Overall idea and paper structure}
\label{sec:1_2}
We analyze the overall approximation error entailed by the use of POD-DL-ROMs when dealing with the solution of both linear and nonlinear time-dependent parametrized PDEs by highlighting two separate error contributions: one, coming from the preliminary dimensionality reduction obtained through POD, and one entailed by the use of neural networks.

In brief, the idea goes as follows. First, we show that in the finite data regime, the overall error of a POD-DL-ROM, $\mathcal{E}_{R}$, can be decomposed as
$$\mathcal{E}_R \le \mathcal{E}_S + \mathcal{E}_{POD} + \mathcal{E}_{NN},$$
where $\mathcal{E}_S$ is the sampling error, $\mathcal{E}_{POD}$ is the POD projection error, and $\mathcal{E}_{NN}$ is the approximation error of the neural network model in the DL-ROM pipeline. Then, we address each of the three contributions separately. 

For the first two, we rely on classical arguments that bind together the discrete and the continuous formulation of POD, see e.g. \cite{Farhat2020,Manzoni2016}, ultimately showing that the sampling error vanishes as a function of the sample size, while $\mathcal{E}_{POD}$ is uniquely characterized by the eigenvalue decay of the data correlation matrix. In this sense, our analysis is strictly related to the one proposed in \cite{deeponets_bounds_2021}.
To study the neural network error, instead, we consider a specific construction that reflects the general philosophy of DL-ROM techniques. More precisely, we emphasize the fact that POD-DL-ROMs use a neural network architecture that is obtained through the combination of two networks: a feature map, $\phi$, which captures the roughness in the parameter-to-solution operator, and a smoother decoder $\phi$. In particular, we base our proof on a generalization of Yarotski's Theorem, due to Gühring et al. \cite{guhring_2021}, which, during the composition step, allows us to keep the approximation error under control. For the sake of better readability, we report the latter result below. 
\begin{extheorem}[Gühring et al. \cite{guhring_relu_2019}]
    Let $b,s \in \mathbb{N}$, with $b\ge1$, $s \ge 2$ and $n\in\{0,1\}$. For any tollerance $0 < \varepsilon < 1/2 $ and any $f \in W^{s,+\infty}([0,1]^{b})$, there exists  and a ReLU DNN $\phi$ having at most $c \log(1/\varepsilon)$ layers and $c \varepsilon^{-b/(n-s)} \log(1/\varepsilon)$ weights, where $c = c(s,b,f,n)$ is a constant, such that
    \begin{equation*}
        \|\phi - f\|_{W^{n,+\infty}([0,1]^{b})}<\varepsilon.
    \end{equation*}
\end{extheorem}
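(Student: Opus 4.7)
The plan is to adapt Yarotsky's strategy while maintaining simultaneous control of the function values and of the first weak derivatives. First I would cover $[0,1]^{b}$ with $O(N^{b})$ overlapping cubes of side length $\sim 1/N$, where $N$ is to be calibrated against $\varepsilon$, and equip the cover with a smooth partition of unity $\{\psi_{j}\}$ subordinate to it. On each cube I would replace $f$ by its Taylor polynomial $P_{j}$ of degree $s-1$ centered at the cube midpoint, so that
\[
f(x) \;\approx\; \sum_{j} \psi_{j}(x)\, P_{j}(x),
\]
with a Taylor remainder of order $N^{-(s-n)}$ in the $W^{n,+\infty}$ norm (using $f \in W^{s,+\infty}$).

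The next step is to realize this localized polynomial expansion by a ReLU network $\phi$. Two building blocks are needed: a ReLU emulation of the bump functions $\psi_{j}$, which can be made essentially exact by choosing each $\psi_{j}$ as a tensor product of piecewise polynomial hat functions; and a ReLU emulation of scalar multiplication $(x,y)\mapsto xy$, the classical tool dating back to Yarotsky and obtained from the sawtooth approximation of $t \mapsto t^{2}$. Each monomial of degree up to $s-1$ is then assembled with $O(\log s)$ multiplications arranged in a binary tree, and the contributions from the $O(N^{b})$ patches are added in parallel. The resulting network has depth $O(\log(1/\varepsilon))$ and a number of weights of order $N^{b}$ up to logarithmic factors.

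The main obstacle, and precisely what distinguishes this theorem from the $L^{\infty}$ result of Yarotsky, is the derivative control in the case $n=1$. Since ReLU is only Lipschitz, its gradient is piecewise constant, and the gradient of the sawtooth multiplier converges to the true gradient at a slower rate than the function values themselves. One therefore has to revisit the standard construction and establish a quantitative $W^{1,+\infty}$ error bound on the approximate multiplier, and then propagate this refined estimate through the partition-of-unity sum, exploiting the fact that at each point $x$ only $O(1)$ bumps are active. This is where the exponent $s-n$ (rather than $s$) arises in the final complexity count.

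Finally I would balance the Taylor remainder $O(N^{-(s-n)})$ against the accumulated network error, which fixes $N \sim \varepsilon^{-1/(s-n)}$ and yields the claimed $c\,\varepsilon^{-b/(s-n)}\log(1/\varepsilon)$ weights together with $O(\log(1/\varepsilon))$ layers, the constant $c$ depending only on $s$, $b$, $n$ and on $\|f\|_{W^{s,+\infty}}$.
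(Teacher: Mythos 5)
This statement is quoted in the paper as an external result of Gühring et al.\ \cite{guhring_relu_2019}; the paper itself supplies no proof, so there is nothing internal to compare your attempt against. Judged against the strategy of the cited source, your sketch is essentially the right one: localization via a partition of unity subordinate to an $O(N^{b})$ cover, local polynomials of degree $s-1$ (note that $W^{s,+\infty}$ on a cube coincides with $C^{s-1,1}$, so pointwise Taylor expansion is legitimate and the remainder is $O(N^{-(s-n)})$ in $W^{n,+\infty}$), ReLU realization of the bumps and of approximate multiplication via the sawtooth construction, and a final balancing that gives $N\sim\varepsilon^{-1/(s-n)}$. You also correctly isolate the genuinely new difficulty relative to Yarotsky's $L^{\infty}$ theorem, namely the quantitative $W^{1,+\infty}$ control of the approximate multiplier and its propagation through the sum, using that only $O(1)$ bumps are active at each point. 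Two small remarks: the exponent as printed in the statement, $\varepsilon^{-b/(n-s)}$, is a sign slip for $\varepsilon^{-b/(s-n)}$, and your version is the correct one; and in a full write-up you would need to verify that the ReLU emulation of the bump functions is exact (or has error and Lipschitz defect of the same order as the multiplier), since an uncontrolled gradient error there would spoil the $n=1$ case.
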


All of this ultimately allows us to characterize the accuracy of POD-DL-ROMs in terms of their complexity, providing explicit error bounds that we later compare with the existing literature and verify numerically.

The paper is organized as follows: in Section \ref{sec:three} we formulate the problem, describing rigorously the POD-DL-ROM approach and the reducibility measures for the framework at hand; Section~\ref{sec:four} contains the main results of this work, namely the \textit{error decomposition} formula, a \textit{lower bound} result and an \textit{upper bound} result for the approximation error. 
Section~\ref{sec:five} then demonstrates advantages of POD-DL-ROMs when compared to similar deep learning-based frameworks, such as, e.g., POD+DNN and DeepONets. Finally, a series of numerical experiments that validate the theoretical analysis is shown in Section~\ref{sec:six}, while the last section draws some conclusions and summarizes possible further developments.

\section{An overview of the POD-DL-ROM technique}\label{sec:three}

POD-DL-ROMs provide a general-purpose ROM approach combining a data dimensionality reduction obtained through POD with the DL-ROM approach. After introducing the general class of problems we deal with, we overview the main building blocks of the POD-DL-ROM technique. For further details regarding, e.g., detailed algorithms for the offline (or training) and the online query (or testing) stages, the interested reader can refer to, e.g., \cite{pod_dl_rom_2022}. An extension of the POD-DL-ROM technique in view of time forecasts of the problem solution out of the training time window has been proposed in \cite{fresca2021long}.

\subsection{Problem formulation} 
Within this work, we consider time-dependent parametric PDEs of the following type
\begin{equation}
    \label{eq:general_formulation}
    \left\{
    \begin{aligned}
        \frac{\partial u}{\partial t} + \mathcal{L} (\muv) u(\muv,t) + \mathcal{N}(u(\muv,t), \muv) &= f(\muv,t), & \mbox{in} \ \ \Omega \times (0,T] \\
        \mathcal{B}(\muv) u(\muv,t) &= g(\muv,t) , & \mbox{on} \ \ \partial{\Omega} \times (0,T] \\
        u(\muv,0) &= u_0(\muv), & \mbox{in} \ \ \Omega,
    \end{aligned}
    \right. 
\end{equation}
where:
\begin{itemize}
    \item $u = u(x,\muv,t)$ is the PDE solution $\forall x \in \Omega$. Here we highlight the explicit dependence of $u$ on the time variable $t \in \mathcal{T} = [0,T]$ (for some $T > 0$) and the input parameter vector $\muv \in \mathcal{P} \subset \mathbb{R}^{p}$, $\mathcal{P}$ compact;
    \item $\mathcal{L}$ is a linear operator, whereas $\mathcal{N}$ is a nonlinear operator and $\mathcal{B}$ is the boundary operator; virtually, all these operators might be $\muv$-dependent
    \item $u_0 = u_0(\muv)$ is the initial condition;
    \item $\Omega$ is the (bounded) spatial domain where the problem is set.
\end{itemize}
Depending on the nature of the problem, input parameter can refer to either physical or geometrical properties of the problem at hand. We considered the formulation \eqref{eq:general_formulation_FOM} as general framework since it describes a wide variety of problems ranging in the fields of engineering, physics, and life sciences, just to make a few examples. Introducing a computational mesh over $\Omega$ with mesh size $h>0$ and a corresponding space discretization of the problem \eqref{eq:general_formulation} having $N_h$ degrees of freedom (dofs) obtained through, e.g., the finite element method, the finite-dimensional counterpart of problem \eqref{eq:general_formulation} provides our FOM and reads as follows:
\begin{equation}
    \label{eq:general_formulation_FOM}
    \left\{
    \begin{aligned}
        {\bf M}(\muv) \frac{\partial {\bf u}}{\partial t}(\muv,t) + {\bf A}(\muv){\bf u} (\muv,t) + {\bf N}({\bf u}(\muv,t),\muv) &= {\bf f}(\muv,t), & t \in (0,T] \\
        {\bf u}(\muv, 0) &= {\bf u}_0(\muv), & 
    \end{aligned}
    \right. 
\end{equation}
where $\mathbf{u}(\muv,t) \in \mathbb{R}^{N_h}$ denotes the vector of the $N_h$ dofs of the FOM solution, ${\bf M}(\muv) \in \mathbb{R}^{N_h \times N_h}$ the mass matrix, ${\bf A}(\muv) \in \mathbb{R}^{N_h \times N_h}$ the stiffness matrix, ${\bf N}( \cdot, \muv): \mathbb{R}^{N_h} \rightarrow \mathbb{R}^{N_h}$ a nonlinear map,  ${\bf f}(\muv,t) \in \mathbb{R}^{N_h}$ the source term and ${\bf u}_0(\muv) \in \mathbb{R}^{N_h}$ the initial data.  The FOM \eqref{eq:general_formulation_FOM} is then discretized in time, introducing a suitable time advancing scheme over a partition of $\mathcal{T}$ made by $N_t$ time steps $\{t_k\}_{k=1}^{N_t}$. 

To explore efficiently the solution manifold $\mathcal{S}_{N_h} = \{\mathbf{u}(\muv,t)  \, : \,  (\muv,t) \in \mathcal{P} \times \mathcal{T}\}$ we employ the POD-DL-ROM technique, performing a two-step dimensionality reduction: first, POD (realized through randomized SVD) is applied on a set of FOM snapshots; then, a DL-ROM is built to approximate the map between $(\muv,t)$ and the POD generalized coordinates. This latter task can be achieved by relying on two neural network architectures, {\em (i)} a deep autoencoder -- possibly involving convolutional layers --  that extracts a set of few, latent coordinates, ultimately representing the reduced-order coordinates of the ROM, and {\em (ii)} a deep feedforward neural network, to learn the map between $(\muv,t)$ and these latent coordinates. Below, we report the main building blocks of a POD-DL-ROM, originally proposed in \cite{pod_dl_rom_2022}: \smallskip

\begin{itemize}
    \item [(i)] the snapshot matrix for the parameter vectors $\muv_j$, $j=1, \ldots ,N_s$ is collected, thus obtaining $\mathbf{U}_{j} = [\mathbf{u}(\muv_j,t_k)]_{k=1}^{N_t} \in \mathbb{R}^{N_h \times N_t}$; \smallskip
    
    \item [(ii)] the whole snapshot matrix is obtained stacking $\mathbf{U}_j$, $j=1,\ldots,N_s$, namely $\mathbf{U} = [\mathbf{U}_{j}]_{j=1}^{N_s}  \in \mathbb{R}^{N_h \times N_{data}}$, where $N_{data} = N_s N_t$; \smallskip
    
    \item [(iii)] a singular value decomposition (SVD) is performed on the snapshot matrix $\mathbf{U}$, and the first $N$ left singular vectors are retained, thus yielding  $\mathbf{U} \approx \mathbf{V} \bm{\Sigma} \mathbf{W}^T$, where $\mathbf{V} \in \mathbb{R}^{N_h \times N}$, $\bm{\Sigma} \in \mathbb{R}^{N \times N}$ and $\mathbf{W} \in \mathbb{R}^{N \times N_{data}}$. Then, projecting $\mathbf{U}$ on the reduced linear subspace $\mathbf{V} \in \mathbb{R}^{N_h \times N}$, we obtain a snapshot matrix for the POD coefficients $\mathbf{Q} = \mathbf{V}^T \mathbf{U}$; \smallskip
    
    \item [(iv)] the POD coefficient vectors $\bm{q}(\muv_j,t_k)$, $j=1,\ldots,N_s$, $k=1,\ldots,N_t$, obtained from the columns of $\mathbf{Q}$, along with the parameters vector $\muv_j$ and the time instants $t_k$, are used to train a DL-ROM. This latter consists of a deep autoencoder $\Psi \circ \Psi'$ and a deep feedforward neural network (to which we refer to as {\em reduced network}) $\phi$, defined as follows: 
    \begin{equation*}
        \label{eq:POD-DL-ROM-architecture}
        \left\{
        \begin{aligned}
         \bm{z}^{DYN} &= \phi(\bm{\theta}_{DYN};\muv_j,t_k)\\
                \bm{z}^{ENC} &= \Psi'(\bm{\theta}_{ENC};\bm{q}(\muv_j,t_k))\\
                \hat{\bm{q}} &= \Psi(\bm{\theta}_{DEC};\bm{z}^{DYN}(\bm{\theta}_{DYN},\muv_j,t_k)),
        \end{aligned}
        \right. 
    \end{equation*} 
    where $\phi,\Psi',\Psi$ are the reduced network, the encoder and the decoder, respectively, while $\bm{\theta}_{DYN}, \bm{\theta}_{ENC}, \bm{\theta}_{DEC}$ are their corresponding neural network weights and biases (they are omitted, hereon, for the sake of readability). 
    The three networks are trained according to the \textit{per-example} loss function below,
    \begin{equation*}
        \label{eq:POD-DL-ROM-loss}
         \mathcal{L}_{supervised} = \omega_{N} \mathcal{L}_{N} + \omega_n \mathcal{L}_{n},
    \end{equation*}
    where 
    \begin{equation*}
        \begin{aligned}
             \mathcal{L}_{N} &= \sum_{j=1}^{N_s}
             \sum_{k=1}^{N_t}
             \|\hat{\bm{q}}(\muv_j,t_k) - \bm{q}(\muv_j,t_k)\|^2, \\ \mathcal{L}_{n} &=  
             \sum_{j=1}^{N_s}
             \sum_{k=1}^{N_t}
             \|\bm{z}^{ENC}(\muv_j,t_k) - \bm{z}^{DYN}(\muv_j,t_k)\|^2,
        \end{aligned}
    \end{equation*}
    and $n$ denotes the latent dimension of the architecture. As a matter of notation, from hereon we equip any finite dimensional space $\mathbb{R}^b$ (for some $b\in \mathbb{N}$) with the $\ell^2$ norm: thus, unless otherwise stated, we define $\|\cdot\| := \|\cdot\|_2$.
    It is worth to remark that $\mathcal{L}_{N}$ penalizes high reconstruction errors and $\mathcal{L}_{n}$ ensures a good representation in the latent space. 
\end{itemize}

Recalling that $(\muv,t) \rightarrow \mathbf{V}\hat{\bm{q}}(\muv,t) \approx \mathbf{u}(\muv,t)$ provides the POD-DL-ROM approximation, the objective of the present work is to characterize the relative error
\begin{equation*}
       \mathcal{E}_R := \biggl(\int_{\mathcal{P} \times \mathcal{T}}  \frac{\|\mathbf{u}(\muv,t)- \mathbf{V}\hat{\bm{q}}(\muv,t)\|^2}{\|\mathbf{u}(\muv,t)\|^2} d(\muv,t) \biggl)^{1/2}
\end{equation*}
in terms of the POD-DL-ROMs complexity. Here, we choose to focus on analyzing $\mathcal{E}_R$ since it is a common measure for the accuracy in the ROM literature. Moreover, we highlight that the entire workflow yielding the error estimate we propose in this work is only based on the approximation error, without considering the contribution carried by the training error. The extension to more general vector energy norms including the contribution of symmetric positive definite mass matrices to define the counterpart of norms in functional spaces like, e.g., $L^2(\Omega)$ or $H^1(\Omega)$, is also straightforward and is not considered here for the sake of simplicity.

\subsection{POD: from the discrete to the continuous formulation} 
\label{subsec:POD-analysis}
Before proceeding towards the thorough analysis of $\mathcal{E}_R$, we have to appropriately define the working setting, which depends on the linear dimensionality reduction. First, we notice that even though within the POD-DL-ROM pipeline we computed the POD matrix $\mathbf{V}$ through the (randomized) SVD algorithm, thus using a fully \textit{data-driven} procedure that employs a set of training data, the relative error $\mathcal{E}_R$ aims at measuring the \textit{approximation} capabilities over the entire time-parameter space $\mathcal{P} \times \mathcal{T}$, taking advantage of a continuous formulation. Within this section, we aim at filling the gap between the discrete and the continuous formulation of POD, highlighting links and bounds, focusing initially only on the source of error coming from the projection phase, rather than directly considering $\mathcal{E}_R$: this allows us to set the ground upon which the more complex approximation results of POD-DL-ROM are based.

We start by considering the $(\mathcal{P} \times \mathcal{T})$-discrete setting, and the fact that $\mathbf{V}$ results from the solution of a minimization problem; indeed, denoting by 
\begin{equation*}
    \mathbf{K} = \frac{|\mathcal{P} \times \mathcal{T}|}{N_{data}}\mathbf{U}\mathbf{U}^T \in \mathbb{R}^{N_h \times N_h}
\end{equation*}
the (discrete) correlation matrix and by $\sigma_k^2$ its eigenvalues, it holds that \cite{Manzoni2016} 
\begin{equation*}
    \begin{aligned}
        \sum_{k>N} \sigma_k^2 &= \frac{|\mathcal{P} \times \mathcal{T}|}{N_{data}} \sum_{j=1}^{N_{data}}\|\mathbf{u}_j - \mathbf{V}\mathbf{V}^T\mathbf{u}_j\|^2 \\ &= \min_{\mathbf{W} \in \mathbb{R}^{N_h \times N}: \mathbf{W}^T\mathbf{W} = \bm{I}} \frac{|\mathcal{P} \times \mathcal{T}|}{N_{data}} \sum_{j=1}^{N_{data}}\|\mathbf{u}_j - \mathbf{W}\mathbf{W}^T\mathbf{u}_j\|^2,
    \end{aligned}
\end{equation*}
where $N$ is the chosen POD dimension and $\mathbf{u}_j$ is the solution vector that corresponds to the tuple $(\muv,t)_j$. 
We can proceed analogously for the $(\mathcal{P} \times \mathcal{T})$-continuous setting, by considering 
\begin{equation}\label{eq:correlation_matrix}
    \mathbf{K}_{\infty} = \int_{\mathcal{P} \times \mathcal{T}} \mathbf{u}(\muv,t) \mathbf{u}(\muv,t)^T d(\muv,t) \in \mathbb{R}^{N_h \times N_h}
\end{equation} 
as the (continuous) correlation matrix and denoting by $\sigma_{k,\infty}^2$ its eigenvalues; similarly, we can prove that there exists an optimal rank-$N$ matrix $\mathbf{V}_{\infty} \in \mathbb{R}^{N_h \times N}$ such that
\begin{equation*}
    \begin{aligned}
        \sum_{k>N} \sigma_{k,\infty}^2 &= \int_{\mathcal{P} \times \mathcal{T}} \|\mathbf{u}(\muv, t) - \mathbf{V}_{\infty}\mathbf{V}_{\infty}^T\mathbf{u}(\muv, t)\|^2  d(\muv,t) 
        \\ &= \min_{\mathbf{W} \in \mathbb{R}^{N_h \times N}: \mathbf{W}^T\mathbf{W} = \bm{I}} \int_{\mathcal{P} \times \mathcal{T}} \|\mathbf{u}(\muv, t) - \mathbf{W}\mathbf{W}^T\mathbf{u}(\muv, t)\|^2  d(\muv,t) 
    \end{aligned}
\end{equation*}

From the considerations above, we can infer that 
\begin{equation*}
    \begin{aligned}
         \sum_{k>N} \sigma_{k,\infty}^2 &= \int_{\mathcal{P} \times \mathcal{T}} \|\mathbf{u}(\muv, t) - \mathbf{V}_{\infty}\mathbf{V}_{\infty}^T\mathbf{u}(\muv, t)\|^2  d(\muv,t) 
         \\ & \le \int_{\mathcal{P} \times \mathcal{T}} \|\mathbf{u}(\muv, t) - \mathbf{V}\mathbf{V}^T\mathbf{u}(\muv, t)\|^2  d(\muv,t);
    \end{aligned}
\end{equation*}
from the inequality above, we can remark that the \textit{data-driven} POD matrix $\mathbf{V}$ is not optimal for the continuous formulation, which stems from the hypothesis of having infinite data samples, while being the best orthogonal matrix in terms of explained variability with respect the training data at hand. In other words,  even though $\mathbf{V}$ is optimal for the training data, we have no guarantee that it is optimal for the test data, too; however, since in practice we are not able to obtain the matrix $\mathbf{V}_{\infty}$,  we must necessarily rely on $\mathbf{V}$ also in the online testing phase. 

Finally, we show how the discrete and the continuous POD formulations are related: indeed, denoting by $[\cdot]_i$ the $i$-th entry of a vector, and extending this notation to matrices, we have that  $\forall k,l=1,\ldots,N_h$
\begin{equation*}
    [\mathbf{K}_{\infty} - \mathbf{K}]_{kl} = \int_{\mathcal{P} \times \mathcal{T}} [\mathbf{u}]_k [\mathbf{u}]_l d(\muv,t) - \frac{|\mathcal{P} \times \mathcal{T}|}{N_{data}} \sum_{j=1}^{N_{data}} [\mathbf{u}_j]_k [\mathbf{u}_j]_l,
\end{equation*}
recalling that $\mathbf{u}_j$ is the solution vector that corresponds to the tuple $(\muv,t)_j$. Upon requiring integrability (easily verified for non-trivial bounded solutions), we can use the Strong Law of Large Numbers \cite{jacod_protter} and obtain $[\mathbf{K} - \mathbf{K}_{\infty}]_{kl} \xrightarrow{a.s.} 0$ as $N_s, N_t \rightarrow \infty, \forall k,l=1,\ldots,N_h$, which implies that $\|\mathbf{K} - \mathbf{K}_{\infty}\|_1 \xrightarrow{a.s.} 0$, being $\|\mathbf{Z}\|_1$ any 1-norm of the squared matrix $\mathbf{Z}$. By employing Bauer-Fike's theorem \cite{Quarteroni2014} with the $1$-norm, we can state that, upon ordering, for any $\sigma_{k,\infty}^2$, there exists $\sigma_k^2$ belonging to the spectrum of $\mathbf{K}$ such that
\begin{equation*}
    |\sigma_k^2 - \sigma_{k,\infty}^2| \le K_1(\mathbf{X})\|\mathbf{K} - \mathbf{K}_{\infty}\|_1, \qquad \forall k=1,\ldots,N_h
\end{equation*}
where $\mathbf{X}$ is the matrix collecting the right eigenvectors of $\mathbf{K}$, and $K_1(\mathbf{X})$ denotes its condition number. Thus, we can conclude that, setting $N$ as the POD dimension, it holds that
\begin{equation*}
    \sum_{k>N} \sigma_k^2 \xrightarrow{a.s.} \sum_{k>N} \sigma_{k,\infty}^2, \qquad N_s, N_t \rightarrow \infty. 
\end{equation*}

\subsection{An overlook over the reducibility measures for POD-DL-ROMs} 

POD-DL-ROMs couple POD, for the sake of a preliminary dimensionality reduction, with an autoencoder-based architecture to reconstruct the parameter-to-POD-coefficients map. Thus, at first it is evident that the projection-based nature of the paradigm invokes the definition of a linear reducibility measure to account for the {\em FOM-to-POD} dimensionality reduction task. 
 
\begin{definition}
    Let $\mathcal{S}_{N_h} = \{\mathbf{u}(\muv,t) \in \mathbb{R}^{N_h}: (\muv,t) \in \mathcal{P} \times \mathcal{T}\}$ be the solution manifold. The linear Kolmogorov N-width of $\mathcal{S}_{N_h}$ is defined as
    \begin{equation*}
        d_N(\mathcal{S}_{N_h}) = \inf_{V_N \subset \mathbb{R}^{N_h}: \dim(V_N) = N} \sup_{\mathbf{u} \in \mathcal{S}_{N_h}} \inf_{\mathbf{v} \in V_N} \|\mathbf{u} - \mathbf{v}\|.
    \end{equation*}
\end{definition}

It is worth to notice that the linear Kolmogorov $N$-width is strictly related to the eigenvalues decay of the correlation matrix $\mathbf{K}_{\infty} \in \mathbb{R}^{N_h \times N_h}$. In fact, following the same notation of Subsection \ref{subsec:POD-analysis}, we have that:
\begin{equation*}
    \begin{aligned}
    \sqrt{\sum_{k>N} \sigma_{k,\infty}^2} &= \biggl(\int_{\mathcal{P} \times \mathcal{T}} \|\mathbf{u}(\muv,t) - \mathbf{V}_{\infty}\mathbf{V}_{\infty}^T\mathbf{u}(\muv,t)\|^2 d(\muv,t) \biggl)^{1/2}
    \\ & \le \biggl(\int_{\mathcal{P} \times \mathcal{T}} \|\mathbf{u}(\muv,t) - \mathbf{W}\mathbf{W}^T\mathbf{u}(\muv,t)\|^2 d(\muv,t) \biggl)^{1/2}
    \\ & \le  |\mathcal{P} \times \mathcal{T}|^{1/2} \sup_{(\muv,t) \in \mathcal{P} \times \mathcal{T}} \|\mathbf{u}(\muv,t) - \mathbf{W}\mathbf{W}^T\mathbf{u}(\muv,t)\|^2
    \end{aligned}
\end{equation*}
for any $\mathbf{W} \in \mathbb{R}^{N_h \times N}$; thus,
\begin{equation*}
    \sqrt{\sum_{k>N} \sigma_{k,\infty}^2} \le |\mathcal{P} \times \mathcal{T}|^{1/2} d_N(\mathcal{S}_{N_h}).
\end{equation*}
The above relationship shows that the eigenvalue decay is an alternative (and more practical) measure of reducibility, with respect to a weaker norm. However, notice that in practice we can only approximate the quantity $\sum_{k>N} \sigma_{k,\infty}^2 \approx  \sum_{k>N} \sigma_{k}^2$, which is consistent with the theory thanks to the convergence result presented in Subsection \ref{subsec:POD-analysis}.

The autoencoder-based architecture of a POD-DL-ROM introduces a second level of dimensionality reduction, which operates a further compression of the information coming from the parameter-to-POD-coefficients map $\mathcal{Q}: (\muv,t) \rightarrow \mathbf{V}^T\mathbf{u}(\muv,t)$. The nonlinear nature of the dimensionality reduction performed through the autoencoder $\Psi \circ \Psi'$ (being $\Psi', \Psi$ the \textit{encoder} and  the \textit{decoder}, respectively) induces a nonlinear analogue of the Kolmogorov $n$-width  \cite{nonlinear_kolmogorov}.

\begin{definition}
\label{def:nonlinear_kolmogorov}
The nonlinear Kolmogorov $n$-width of the reduced manifold $\mathcal{S}_N = \{\bm{q}(\muv,t) = \mathbf{V}^T\mathbf{u}(\muv,t) \in \mathbb{R}^{N}: (\muv,t) \in \mathcal{P} \times \mathcal{T}\}$ is defined as 
    \begin{equation*}
        \label{eq:nonlinear_kolmogorov}
        \delta_n(\mathcal{S}_N) = \inf_{\substack{\Psi \in C(\mathbb{R}^{N},\mathbb{R}^n) \\ \Psi' \in C(\mathbb{R}^n, \mathbb{R}^{N})}} \sup_{\mathbf{u} \in \mathcal{S}_{N_h}} \|\mathbf{u} - \Psi(\Psi'(\mathbf{u}))\|.
    \end{equation*}
\end{definition}

Now, to deal with nonlinear approximation methods, we state another fundamental definition upon which the main results of this work are based.
 
\begin{definition}
\label{def:perfect_embedding}
    The reduced manifold $\mathcal{S}_N = \{\bm{q}(\muv,t) = \mathbf{V}^T\mathbf{u}(\muv,t) \in \mathbb{R}^{N}: (\muv,t) \in \mathcal{P} \times \mathcal{T}\}$ enjoys the perfect embedding Assumption with regularity $s,s'$ if the infimum in Definition \ref{def:nonlinear_kolmogorov} is attained, namely there exist $\Psi_* \in C^s(\mathbb{R}^{N},\mathbb{R}^n), \Psi'_* \in C^{s'}(\mathbb{R}^n, \mathbb{R}^{N})$ such that
    \begin{equation*}
        \Psi_*(\Psi'_*(\bm{q}(\muv,t)) = \bm{q}(\muv,t) \qquad \forall (\muv,t) \in \mathcal{P} \times \mathcal{T}.
    \end{equation*}
\end{definition}
 
In conclusion, as we did with the POD dimension $N$, we need to characterize the latent dimension $n$ with a practical criterion. To do that, an extension of \textit{Theorem 3} provided in \cite{dlrom_bounds} shows that if the parameter-to-solution map $\mathcal{G}: (\muv,t) \rightarrow \mathbf{u}(\muv,t)$ and thus the parameter-to-POD-coefficients map $\mathcal{Q}: (\muv,t) \rightarrow \mathbf{V}^T\mathbf{u}(\muv,t)$ are Lipschitz-continuous, there exists $n \le 2p + 3$ such that $\delta_n(\mathcal{S}_N) = 0$.

\section{Main results}
\label{sec:four}
Before stating the main result of this work, namely an \textit{upper bound} result, that concerns only POD-DL-ROMs, we make some preliminary reasoning that, instead, applies to any POD+DNN approach, i.e. we do not constrain the neural network $\hat{\bm{q}}$, that approximates the parameter-to-POD-coefficients map, to be a DL-ROM. For this purpose, we briefly recall that the POD+DNN technique involves the reconstruction of the parameter-to-solution map through the approximation $(\muv,t) \mapsto \mathbf{V}\hat{\bm{q}}(\muv,t) \approx \mathbf{u}(\muv,t)$, where $\hat{\bm{q}}$ is a generic (possibly dense) neural network.

In particular, we start by characterizing $\mathcal{E}_R$ through an error decomposition formula, that enables us to describe the various error contributions and formulate possible strategies to control them. Secondly, we state a \textit{lower bound} result, that highlights how, regardless of the architecture of neural network $\hat{\bm{q}}$, the relative error $\mathcal{E}_R$ can be bounded from below by a quantity depending on the POD projection.
\nicola{Then,} we move to our \textit{upper bound} result, where we quantify how complex a POD-DL-ROM should be in order to achieve a specific bound on the relative error $\mathcal{E}_R$.

We initially remark that the computation of the error $\mathcal{E}_R$ and other related quantities hinges upon the evaluation of complex integrals, possibly in high dimensional spaces, which can be effectively handled through Monte Carlo methods.
\nicola{In this respect, we shall make the following assumptions, which we assume to hold true hereon.}

\begin{assumption}[
\nicola{S}ampling criterion]
\label{assumption:sampling}
 \nicola{Let $p>0$,} assume that $\mathcal{P} \subset \mathbb{R}^p$ is 
 compact 
 and denote $\mathcal{T} = [0,T]$ for \nicola{some} $T > 0$. \nicola{We assume that the training (and testing) snapshots are sampled uniformly and iid in the parameter space, }
 $\muv \sim \mathcal{U}(\mathcal{P})$, 
 \nicola{while a uniform grid is employed for} 
 the time variable, $t \in \{\Delta t, 2 \Delta t, ..., N_t \Delta t\}$, where $N_t \in \mathbb{N}_{\ge 2}$ and $\Delta t = T / N_t$.
\end{assumption}


\begin{assumption}[Parameter-to-solution map]
\label{assumption:solution-map}
      \nicola{L}et $\mathcal{G}:\nicola{\mathcal{P} \times \mathcal{T}\to\mathbb{R}^{N_{h}}}$ 
      be the parameter-to-solution map\nicola{, mapping $(\muv,t) \mapsto \mathbf{u}(\muv,t)$}. We 
      assume that
    \begin{itemize}
        \item[\nicola{i)}] 
        $m = \essinf_{(\muv,t) \in \mathcal{P} \times \mathcal{T}} \|\mathbf{u}(\muv,t)\| > 0$, $M = \esssup_{(\muv,t) \in \mathcal{P} \times \mathcal{T}} \|\mathbf{u}(\muv,t)\| < \infty$;
        \nicola{\vspace{0.5em}}
        \item[\nicola{ii)}] 
        $\mathcal{G}$ is Lipschitz-continuous with constant $L >0$.
    \end{itemize}
\end{assumption}

\nicola{From these assumptions, one can easily derive a couple of auxiliary results, which will be of practical interest in the remainder, and are reported below; for the sake of brevity, their proofs are postponed to  Appendix  \ref{sec:appendix}}.

\begin{proposition}
\label{prop:monte-carlo}
    \nicola{Let $f\in L^2(\mathcal{P} \times \mathcal{T})$.} Under 
    Assumption \ref{assumption:sampling}, 
    one has
    \begin{equation*}
        \mathbb{E}\biggl| \int_{\mathcal{P} \times \mathcal{T}} f(\muv,t) d(\muv,t) - \frac{|\mathcal{P} \times \mathcal{T}|}{N_{data}} \sum_{i=1}^{N_t}\sum_{j=1}^{N_s} f(\muv_j, t_i) \biggr| \le O(N_s^{-1/2} + N_t^{-1}).
    \end{equation*}
    \nicola{where the expectation is taken across all the possible realizations of the data sampling procedure.} 
\end{proposition}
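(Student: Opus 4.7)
The plan is to introduce a hybrid estimator that integrates exactly in $\muv$ but discretizes in time, and then split the total error via the triangle inequality. Setting
\begin{equation*}
g(\muv) := \Delta t \sum_{i=1}^{N_t} f(\muv,t_i), \qquad G(\muv) := \int_{\mathcal{T}} f(\muv,t)\, dt,
\end{equation*}
one may write
\begin{equation*}
\int_{\mathcal{P}\times\mathcal{T}} f\, d(\muv,t) - \frac{|\mathcal{P}\times\mathcal{T}|}{N_{data}} \sum_{i,j} f(\muv_j, t_i) = \underbrace{\int_{\mathcal{P}}(G - g)\, d\muv}_{=:\, E_t} + \underbrace{\int_{\mathcal{P}} g\, d\muv - \frac{|\mathcal{P}|}{N_s}\sum_{j=1}^{N_s} g(\muv_j)}_{=:\, E_s},
\end{equation*}
isolating a purely deterministic time-quadrature error $E_t$ and a parameter-space Monte Carlo error $E_s$. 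I would then take $\mathbb{E}|\cdot|$, apply the triangle inequality, and treat the two contributions separately.

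For $E_t$, the $O(N_t^{-1})$ rate follows from a pointwise-in-$\muv$ rectangle-rule estimate: under Lipschitz continuity of $f(\muv,\cdot)$ uniformly in $\muv$ --- which in the applications of this proposition is inherited from Assumption~\ref{assumption:solution-map}, since the relevant integrands are built from the Lipschitz map $\mathcal{G}$ --- one gets $|G(\muv)-g(\muv)| \le L_t\, T\, \Delta t$, hence $|E_t| \le |\mathcal{P}|\, L_t\, T\, \Delta t = O(N_t^{-1})$ deterministically.

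For $E_s$, the $\muv_j$ being iid $\mathcal{U}(\mathcal{P})$ makes $\frac{|\mathcal{P}|}{N_s}\sum_j g(\muv_j)$ an unbiased estimator of $\int_{\mathcal{P}} g\, d\muv$, and Jensen's inequality together with a direct variance computation yield
\begin{equation*}
\mathbb{E}|E_s| \le \sqrt{\mathbb{E}|E_s|^2} = \frac{|\mathcal{P}|}{\sqrt{N_s}}\sqrt{\Var[g(\muv_1)]} \le \frac{|\mathcal{P}|^{1/2}}{\sqrt{N_s}}\|g\|_{L^2(\mathcal{P})}.
\end{equation*}
Cauchy-Schwarz applied to the Riemann sum gives $|g(\muv)|^2 \le T\,\Delta t \sum_i f(\muv,t_i)^2$, so the $L^2$-hypothesis on $f$ (together with the regularity used above) controls $\|g\|_{L^2(\mathcal{P})}$ by a constant independent of $N_t$, and combining the two bounds via the triangle inequality yields the claimed $O(N_s^{-1/2} + N_t^{-1})$ estimate. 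The main technical point is precisely securing this $N_t$-independent second-moment bound for $g$, so that refining the time grid does not degrade the Monte Carlo rate; once it is in hand, the rest is essentially textbook Monte Carlo plus Riemann-sum analysis.
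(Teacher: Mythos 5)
Your proof is correct and follows essentially the same route as the paper's: both split the error into a deterministic time-quadrature term of order $O(N_t^{-1})$ and a parameter-space Monte Carlo term of order $O(N_s^{-1/2})$ via the triangle inequality, with the only organizational difference being that you aggregate over the time grid into a single function $g$ before applying the Monte Carlo variance bound, whereas the paper applies the Monte Carlo estimate at each $t_i$ and then sums. If anything, you are more explicit than the paper about the two technical points it glosses over --- namely that the $O(N_t^{-1})$ rectangle-rule rate and the $N_t$-independent control of the second moment both require regularity in $t$ beyond the stated $L^2$ hypothesis, which in the intended applications is inherited from the Lipschitz continuity of $\mathcal{G}$ in Assumption \ref{assumption:solution-map}.
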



\begin{proposition}
\label{prop:norm}
    Under the Assumption \ref{assumption:solution-map}, define $w(\muv,t) = \|\mathbf{u}(\muv,t)\|^{-2}$. Then
    \begin{equation*}
        \|\cdot\|_{L^2_w} = \biggl(\int_{\mathcal{P} \times \mathcal{T}} \|\cdot\|^2 w(\muv,t) d(\muv,t) \biggr)^{1/2}
    \end{equation*}
    is a norm in $L^2(\mathcal{P}\times \mathcal{T}; \mathbb{R}^{N_h})$.   
\end{proposition}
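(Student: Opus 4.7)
The plan is to reduce the claim to standard facts about the ordinary $L^2$ norm, by first exploiting the essential boundedness of the weight $w(\muv,t) = \|\mathbf{u}(\muv,t)\|^{-2}$. Assumption~\ref{assumption:solution-map}(i) directly yields
$$M^{-2} \le w(\muv,t) \le m^{-2} \qquad \text{for a.e.\ } (\muv,t) \in \mathcal{P} \times \mathcal{T},$$
so that, on the whole space $L^{2}(\mathcal{P}\times\mathcal{T};\mathbb{R}^{N_h})$, the candidate $\|\cdot\|_{L^2_w}$ is both finite and equivalent to the usual $\|\cdot\|_{L^2}$ via $M^{-1}\|f\|_{L^2} \le \|f\|_{L^2_w} \le m^{-1}\|f\|_{L^2}$.

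Given this equivalence, I would then verify the three norm axioms in turn. Non-negativity and absolute homogeneity follow immediately from the definition. Positive definiteness is a consequence of the lower bound, which forces $f = 0$ in $L^2$ as soon as $\|f\|_{L^2_w}$ vanishes. For the triangle inequality, the cleanest device is the substitution $g := \sqrt{w}\,f$, pointwise well-defined a.e.\ since $w>0$: one observes that $\|f\|_{L^2_w} = \|g\|_{L^2}$, and then invokes Minkowski's inequality for the unweighted vector-valued $L^2$ norm, which directly produces $\|f_1 + f_2\|_{L^2_w} \le \|f_1\|_{L^2_w} + \|f_2\|_{L^2_w}$.

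I do not foresee any real obstacle: the proposition is essentially a textbook weighted-$L^2$ fact repackaged to fit the notation of the paper. The only content-bearing point worth flagging is that the strict positivity $m > 0$ in Assumption~\ref{assumption:solution-map}(i) is precisely what upgrades $\|\cdot\|_{L^2_w}$ from a seminorm to a norm; without it, the essential zero set of $\mathbf{u}$ could support nonzero functions with vanishing weighted norm. The Lipschitz hypothesis (ii) plays no role in this particular proposition, although it will presumably be invoked in the subsequent error-estimation arguments.
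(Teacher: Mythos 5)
Your proof is correct and follows essentially the same route as the paper's: a direct verification of the norm axioms, with the triangle inequality obtained from the unweighted Minkowski inequality via the substitution $g=\sqrt{w}\,f$ rather than, as the paper does, by expanding the square and applying the Cauchy--Schwarz inequality to the cross term --- the same computation in different packaging. One correction to your closing remark, though: the roles of the two halves of Assumption \ref{assumption:solution-map}(i) are swapped. It is $M<\infty$ that guarantees $w=\|\mathbf{u}\|^{-2}\ge M^{-2}>0$ almost everywhere and hence positive definiteness (this is exactly the lower bound you invoke in your own argument), while $m>0$ gives $w\le m^{-2}<\infty$ and thus finiteness of $\|\cdot\|_{L^2_w}$ on all of $L^2(\mathcal{P}\times\mathcal{T};\mathbb{R}^{N_h})$. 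On an essential zero set of $\mathbf{u}$ the weight would blow up, so a nonzero function supported there would have infinite, not vanishing, weighted norm; the failure mode when $m=0$ is loss of well-definedness, not degeneration to a seminorm.
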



\subsection{The error decomposition formula}
\label{subsec:error_decomposition}
%
In the following, we state an error decomposition formula that is valid for any POD+DNN approach -- and, in particular, for our POD-DL-ROM strategy. Given the more general nature of this result, its formulation is therefore not restricted to the technique at hand.

\begin{theorem}
    \label{thm:error_decomposition}
      Let $\mathcal{G}: (\muv,t) \mapsto \mathbf{u}(\muv,t)$ for any $(\muv,t) \in \mathcal{P} \times \mathcal{T}$ be the parameter-to-solution map. Consider a POD+DNN approximation of $\mathcal{G}$ as $\mathcal{G}(\muv,t) \approx \mathbf{V}\hat{\bm{q}}$, where $\hat{\bm{q}}:\mathbb{R}^{p+1} \rightarrow \mathbb{R}^N$ is a neural network trained over a given training set made by a collection of input parameters $(\muv_i,t_i)_{i=1}^{N_{data}}$ and the corresponding snapshot matrix $\mathbf{U} \in \mathbb{R}^{N_h \times N_{data}}$, while $\mathbf{V} \in \mathbb{R}^{N_h \times N}$ is the POD projection matrix. \nicola{Then, }
      under the Assumptions \ref{assumption:sampling} and \ref{assumption:solution-map}, we \nicola{have} 
    \begin{equation}
    \label{eq:error_decomposition}
        \mathcal{E}_R \le \mathcal{E}_S + \mathcal{E}_{POD} + \mathcal{E}_{NN},
    \end{equation}
    where:
    \begin{itemize}
        \item[\nicola{$\bullet$}] $\mathcal{E}_S = \mathcal{E}_S(\mathcal{G},\{(\muv_i,t_i)_{i=1}^{N_{data}}\}, N)$ is the sampling error, that satisfies $\mathcal{E}_S \xrightarrow{a.s.} 0$ as $N_s, N_t \rightarrow \infty$ and $\mathbb{E}[\mathcal{E}_S] = O(N_s^{-1/4} + N_t^{1/2})$
        \nicola{;\vspace{0.5em}}
        \item[\nicola{$\bullet$}] $\mathcal{E}_{POD} = \mathcal{E}_{POD}(\mathcal{G},\{(\muv_i,t_i)_{i=1}^{N_{data}}\}, N)$ is the POD projection error, that satisfies $\mathcal{E}_{POD} \xrightarrow{a.s.} \mathcal{E}_{POD,\infty}$ as $N_s, N_t \rightarrow \infty$, where $\mathcal{E}_{POD,\infty} = \mathcal{E}_{POD,\infty}(\mathcal{G}, N)$ is independent of the sampling criterion; \nicola{\vspace{0.5em}}
        \item[\nicola{$\bullet$}] $\mathcal{E}_{NN} = \mathcal{E}_{NN}(\mathcal{G},N,\hat{\bm{q}})$ is the approximation error of the neural network, which is arbitrarily low depending of the approximation capabilities of the network $\hat{\bm{q}}$.
    \end{itemize}
\end{theorem}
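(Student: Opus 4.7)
The natural first move is to pivot around the best-POD approximation $\mathbf{V}\mathbf{V}^{T}\mathbf{u}(\muv,t)$ and split the relative error via the triangle inequality in the weighted norm $\|\cdot\|_{L^2_w}$ of Proposition \ref{prop:norm}, writing
\begin{equation*}
    \mathcal{E}_R = \|\mathbf{u}-\mathbf{V}\hat{\bm q}\|_{L^2_w} \le \|\mathbf{u}-\mathbf{V}\mathbf{V}^{T}\mathbf{u}\|_{L^2_w} + \|\mathbf{V}\mathbf{V}^{T}\mathbf{u}-\mathbf{V}\hat{\bm q}\|_{L^2_w}.
\end{equation*}
Since the columns of $\mathbf{V}$ are orthonormal, the second summand collapses to $\|\mathbf{V}^{T}\mathbf{u}-\hat{\bm q}\|_{L^2_w}$, which I take as the definition of $\mathcal{E}_{NN}$: it is exactly how well $\hat{\bm q}$ approximates the parameter-to-POD-coefficients map $\mathcal{Q}$, hence controllable through universal approximation (quantitative bounds being the subject of the later upper-bound result).

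Next, I would attack the first summand, which depends on the data-driven matrix $\mathbf{V}$ but not on the network. Using $\|\mathbf{u}\|\ge m>0$ from Assumption \ref{assumption:solution-map}, I bound
\begin{equation*}
    \|\mathbf{u}-\mathbf{V}\mathbf{V}^{T}\mathbf{u}\|_{L^2_w}^2 \le \frac{1}{m^2}\int_{\mathcal{P}\times\mathcal{T}} \|\mathbf{u}(\muv,t)-\mathbf{V}\mathbf{V}^{T}\mathbf{u}(\muv,t)\|^2 d(\muv,t),
\end{equation*}
and then introduce a Monte-Carlo pivot by adding and subtracting the empirical quadrature over the training sample,
\begin{equation*}
    \int_{\mathcal{P}\times\mathcal{T}} \|\mathbf{u}-\mathbf{V}\mathbf{V}^{T}\mathbf{u}\|^2 d(\muv,t)
    = \frac{|\mathcal{P}\times\mathcal{T}|}{N_{data}}\sum_{j=1}^{N_{data}}\|\mathbf{u}_j-\mathbf{V}\mathbf{V}^{T}\mathbf{u}_j\|^2 + R.
\end{equation*}
The empirical sum equals exactly $\sum_{k>N}\sigma_k^2$ by the discrete POD optimality recalled in Section \ref{subsec:POD-analysis}, while $R$ is a Monte-Carlo residual. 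Using $\sqrt{a+b}\le\sqrt{a}+\sqrt{b}$, I can set $\mathcal{E}_{POD}:=\tfrac{1}{m}\sqrt{\sum_{k>N}\sigma_k^2}$ and $\mathcal{E}_S:=\tfrac{1}{m}\sqrt{|R|}$, recovering the decomposition \eqref{eq:error_decomposition}.

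To verify the asymptotics, for $\mathcal{E}_{POD}$ I would invoke the a.s.\ convergence $\sum_{k>N}\sigma_k^2\xrightarrow{a.s.}\sum_{k>N}\sigma_{k,\infty}^2$ already established in Section \ref{subsec:POD-analysis} via SLLN plus Bauer-Fike, which yields $\mathcal{E}_{POD,\infty}=\tfrac{1}{m}\sqrt{\sum_{k>N}\sigma_{k,\infty}^2}$, sample-independent as claimed. For $\mathcal{E}_S$, Proposition \ref{prop:monte-carlo} applied to $f(\muv,t)=\|\mathbf{u}(\muv,t)-\mathbf{V}\mathbf{V}^{T}\mathbf{u}(\muv,t)\|^2$ (bounded by $M^2$ and Lipschitz in $t$ by composition) gives $\mathbb{E}|R|=O(N_s^{-1/2}+N_t^{-1})$, and Jensen's inequality on the concave square root yields $\mathbb{E}[\mathcal{E}_S]\le\tfrac{1}{m}\sqrt{\mathbb{E}|R|}=O(N_s^{-1/4}+N_t^{-1/2})$; a.s.\ vanishing follows from SLLN on the same integrand. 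The \emph{main obstacle} I foresee is precisely that the POD basis $\mathbf{V}$ is itself random, so the integrand passed to Proposition \ref{prop:monte-carlo} is sample-dependent: I would handle this either conditionally on the sample, or through the $\mathbf{V}$-independent envelope $\|\cdot\|\le 2M$ inside the Monte-Carlo variance estimate, which keeps the rate intact. Everything else is bookkeeping on top of the tools already laid down in Section \ref{subsec:POD-analysis} and Propositions \ref{prop:monte-carlo}-\ref{prop:norm}.
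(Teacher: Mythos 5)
Your proposal is correct and follows essentially the same route as the paper's proof: the same triangle-inequality split in $\|\cdot\|_{L^2_w}$ around $\mathbf{V}\mathbf{V}^{T}\mathbf{u}$, the same add-and-subtract of $\sum_{k>N}\sigma_k^2$ via discrete POD optimality together with $\sqrt{a+b}\le\sqrt{a}+\sqrt{b}$, and the same appeal to Proposition \ref{prop:monte-carlo}, Jensen, and the Strong Law of Large Numbers for the asymptotics. Your remark that the integrand fed to Proposition \ref{prop:monte-carlo} depends on the sample through the random matrix $\mathbf{V}$ is a subtlety the paper's proof passes over silently, and your proposed fix (conditioning or a $\mathbf{V}$-independent envelope) is a reasonable way to make that step airtight.
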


\begin{proof}
By means of the triangular inequality
, we obtain
\nicola{\begin{multline}
    \label{eq:diseq}
       \mathcal{E}_R  =\left( \int_{\mathcal{P} \times \mathcal{T}} \frac{\|\mathbf{u}(\muv, t)- \mathbf{V} \hat{\bm{q}}(\muv, t)\|^2}{\|\mathbf{u}(\muv,t)\|^2} d(\muv,t) \right)^{1/2} = \|\mathbf{u}(\muv, t)- \mathbf{V} \hat{\bm{q}}(\muv, t)\|_{L^2_w} \\\vspace{0.5em}
 \le \|\mathbf{u}(\muv, t)  - \mathbf{V}\mathbf{V}^T\mathbf{u}(\muv, t)\|_{L^2_w} +  \|\mathbf{V}\mathbf{V}^T\mathbf{u}(\muv, t) - \mathbf{V}\hat{\bm{q}}(\muv, t)\|_{L^2_w}.
\end{multline}
According to the notation of Section~\ref{sec:three}, let $\bm{q}(\muv,t) := \mathbf{V}^T \mathbf{u}(\muv, t)$. We define}
\begin{equation*}
      \mathcal{E}_{NN} := \biggl( \int_{\mathcal{P} \times \mathcal{T}} \frac{\|\mathbf{V}\bm{q}(\muv, t)- \mathbf{V}\hat{\bm{q}}(\muv, t)\|^2}{\|\mathbf{u}(\muv,t)\|^2} d(\muv,t) \biggr)^{1/2}
\end{equation*}
and notice that $\mathcal{E}_{NN}$ is the only error component that depends on the neural network approximation. Moreover, we can bound the remaining 
\nicola{term in \eqref{eq:diseq}} as
\nicola{\begin{equation*}
        \|\mathbf{u}(\muv, t)- \mathbf{V}\mathbf{V}^T\mathbf{u}(\muv, t)\|_{L^{2}_{w}}\le m^{-1}
         \biggl(\int_{\mathcal{P} \times \mathcal{T}} \|\mathbf{u}(\muv, t)- \mathbf{V}\mathbf{V}^T\mathbf{u}(\muv, t)\|^2 d(\muv,t)\biggr)^{1/2}.
\end{equation*}
Let now} $\mathbf{K} = |\mathcal{P} \times \mathcal{T}| N_{data}^{-1} \mathbf{U}\mathbf{U}^T \in \mathbb{R}^{N_h \times N_h}$ 
\nicola{be} the \nicola{discrete} correlation matrix and 
\nicola{let} $\sigma_k^2$ \nicola{be} its eigenvalues. By employing the triangular inequality and the trivial inequality $\sqrt{a+b} \le \sqrt{a} + \sqrt{b}$ for $a,b \ge 0$,
\begin{equation*}
    \begin{aligned}
        & m^{-1}
         \biggl(\int_{\mathcal{P} \times \mathcal{T}} \|\mathbf{u}(\muv, t)- \mathbf{V}\mathbf{V}^T\mathbf{u}(\muv, t)\|^2 d(\muv,t)\biggr)^{1/2} \le 
         \\ \le &m^{-1}
         \biggl(\int_{\mathcal{P} \times \mathcal{T}} \|\mathbf{u}(\muv, t)- \mathbf{V}\mathbf{V}^T\mathbf{u}(\muv, t)\|^2 d(\muv,t) - \sum_{k>N} \sigma_k^2 + \sum_{k>N} \sigma_k^2\biggr)^{1/2} \le \\
         \le &m^{-1}
         \biggl(\biggl|\int_{\mathcal{P} \times \mathcal{T}} \|\mathbf{u}(\muv, t)- \mathbf{V}\mathbf{V}^T\mathbf{u}(\muv, t)\|^2 d(\muv,t) -  \sum_{k>N} \sigma_k^2\biggr| + \sum_{k>N} \sigma_k^2\biggr)^{1/2} \le
         \\ \le & m^{-1}
         \biggl| \int_{\mathcal{P} \times \mathcal{T}} \|\mathbf{u}(\muv, t)- \mathbf{V}\mathbf{V}^T\mathbf{u}(\muv, t)\|^2 d(\muv,t) -   \sum_{k>N} \sigma_k^2 \biggr|^{1/2} +  m^{-1} \sqrt{\sum_{k>N} \sigma_k^2}.
    \end{aligned}
\end{equation*}
\nicola{In light of this,} we define the sampling error as
\begin{equation*}
        \mathcal{E}_{S} := m^{-1}
         \biggl| \int_{\mathcal{P} \times \mathcal{T}} \|\mathbf{u}(\muv, t)- \mathbf{V}\mathbf{V}^T\mathbf{u}(\muv, t)\|^2 d(\muv,t) - \sum_{k>N} \sigma_k^2 \biggr|^{1/2},
\end{equation*}
and the POD error as
\begin{equation*}
    \mathcal{E}_{POD} := m^{-1} \sqrt{\sum_{k>N} \sigma_k^2}.
\end{equation*}
Thus, we obtain the inequality in \eqref{eq:error_decomposition}
\begin{equation*}
    \mathcal{E}_R \le \mathcal{E}_S + \mathcal{E}_{POD} + \mathcal{E}_{NN}.
\end{equation*}
In the last part of the proof we aim at showing the characteristic properties of $\mathcal{E}_S$ and $\mathcal{E}_{POD}$; recalling that
\begin{equation*}
    \sum_{k>N} \sigma_k^2 = \frac{|\mathcal{P} \times \mathcal{T}|}{N_{data}} \sum_{j=1}^{N_{data}}\|\mathbf{u}_j - \mathbf{V}\mathbf{V}^T\mathbf{u}_j\|^2,
\end{equation*}
we can write the sampling error in a slightly different form
\begin{equation*}
    \begin{aligned}
        \mathcal{E}_S = m^{-1}
         \biggl| \int_{\mathcal{P} \times \mathcal{T}} \|\mathbf{u}(\muv, t)- \mathbf{V}\mathbf{V}^T&\mathbf{u}(\muv, t)\|^2 d(\muv,t) - \\ & \frac{|\mathcal{P} \times \mathcal{T}|}{N_{data}} \sum_{j=1}^{N_{data}}\|\mathbf{u}_j - \mathbf{V}\mathbf{V}^T\mathbf{u}_j\|^2 \biggr|^{1/2}.
    \end{aligned}
\end{equation*}
Moreover, thanks to the compactness hypothesis of Assumption \ref{assumption:sampling} and the boundedness hypothesis of Assumption \ref{assumption:solution-map} we have that 
\begin{equation*}
    f(\muv,t) = \|\mathbf{u}(\muv, t)- \mathbf{V}\mathbf{V}^T \mathbf{u}(\muv, t)\|^2 \le M^2 \|\mathbf{I} - \mathbf{V}\mathbf{V}^T \|^2 < +\infty,
\end{equation*}
so that $f \in L^2(\mathcal{P} \times \mathcal{T})$. Thus, by means of Proposition \ref{prop:monte-carlo}, we conclude that $\mathbb{E}[\mathcal{E}_S] = O(N_s^{-1/4} + N_t^{-1/2})$.

Finally, since $\mathcal{E}_S$ and $\mathcal{E}_{POD}$ depend on the number of samples and snapshots in the training set, it is natural to verify their behavior in the \textit{infinite data} limit. Thanks to Assumption \ref{assumption:sampling}, by the Strong Law of Large Numbers, it is evident that $\mathcal{E}_S \xrightarrow{a.s.} 0$ as $N_s, N_t \rightarrow \infty$ and, by means of the results in Section~\ref{sec:three},
\begin{equation*}
\mathcal{E}_{POD} \xrightarrow{a.s.} \mathcal{E}_{POD,\infty} := m^{-1} \sqrt{\sum_{k>N} \sigma_{k,\infty}^2}, \qquad N_s, N_t \rightarrow \infty. \qedhere
\end{equation*}
\end{proof}

\begin{remark}
    \nicola{The convergence rate for $\mathcal{E}_S$ can be improved} by modifying Assumption \ref{assumption:sampling}. Indeed, Monte Carlo sampling could be replaced by other strategies: for instance,  using Quasi-Monte Carlo techniques \cite{niederreiter1992random,caflisch_1998}, and under  suitable regularity  assumptions, one has $\mathbb{E}[\mathcal{E}_{S}] = O((\log(N_{s}))^{\frac{p+1}{2}}N_{s}^{-1/2} + N_t^{-1/2})$.
\end{remark}

\subsection{Lower bound for the relative error}
POD-DL-ROMs couple classical projection-based methods such as the POD with Deep Learning-based techniques that allow to correctly reproduce the nonlinearity of the parameter-to-POD-coefficient map $\mathcal{Q}$. This means that we still need to rely on the linear transformation represented by the POD matrix $\mathbf{V} \in \mathbb{R}^{N_h \times N}$ (or $\mathbf{V}_{\infty}$ in the \textit{infinite data} limit) to expand the neural network approximation of the POD coefficients. 

This last consideration is crucial: indeed, the fact that the POD-DL-ROM technique hinges upon a linear decomposition forces the relative error to still depend on the eigenvalues decay of the correlation matrix; the mentioned dependence is highlighted in the \textit{lower bound} result provided Theorem \ref{thm:lower_bound}.

First, we derive a lower bound for $\mathcal{E}_S + \mathcal{E}_{POD}$: we immediately prove that
\begin{equation*}
    \begin{aligned}
        & \mathcal{E}_S + \mathcal{E}_{POD} \ge
        \\ &\ge m^{-1} \biggl(\int_{\mathcal{P} \times \mathcal{T}} \|\mathbf{u}(\muv, t) - \mathbf{V}\mathbf{V}^T\mathbf{u}(\muv, t)\|^2   d(\muv,t)\biggr)^{1/2} \ge
        \\ &\ge m^{-1} \biggl(\min_{\mathbf{W} \in \mathbb{R}^{N_h \times N}: \mathbf{W}^T\mathbf{W} = \bm{I}} \int_{\mathcal{P} \times \mathcal{T}} \|\mathbf{u}(\muv, t) - \mathbf{W}\mathbf{W}^T\mathbf{u}(\muv, t)\|^2  d(\muv,t) \biggr)^{1/2} =
        \\ & =  m^{-1} \biggl(\int_{\mathcal{P} \times \mathcal{T}} \|\mathbf{u}(\muv, t) - \mathbf{V}_{\infty}\mathbf{V}_{\infty}^T\mathbf{u}(\muv, t)\|^2  d(\muv,t) \biggr)^{1/2} =
        \\ & = m^{-1} \sqrt{\sum_{k>N} \sigma_{k,\infty}^2} = \mathcal{E}_{POD,\infty},
    \end{aligned}
\end{equation*}
by trivially employing the definition of $\mathcal{E}_S$, $\mathcal{E}_{POD}$, and
$\mathbf{V}_{\infty}$. It is worth to remark that $\mathcal{E}_{POD,\infty}$ only depends on the eigenstructure of the continuous correlation matrix $\mathbf{K}_{\infty}$, while it is independent of the data sampling. 
Thus, in the following, we aim at showing that, up to a constant, $\mathcal{E}_{POD,\infty}$ represent a lower bound also for the relative error $\mathcal{E}_R$.
 
\begin{theorem}
    \label{thm:lower_bound}
    Under the same assumptions of Theorem \ref{thm:error_decomposition}, we have that 
    \begin{equation*}
        \mathcal{E}_R \ge \frac{m}{M} \mathcal{E}_{POD,\infty}.
    \end{equation*}
\end{theorem}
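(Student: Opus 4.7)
The plan is to bound $\mathcal{E}_R$ from below in three steps: first remove the weight $\|\mathbf{u}(\muv,t)\|^{-2}$ using the uniform upper bound $M$, then exploit the fact that $\mathbf{V}\hat{\bm{q}}(\muv,t)$ lies in the column span of $\mathbf{V}$ and thus cannot do better than the orthogonal projection $\mathbf{V}\mathbf{V}^T\mathbf{u}(\muv,t)$, and finally invoke the optimality of the continuous POD matrix $\mathbf{V}_{\infty}$.

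First, I would use $\|\mathbf{u}(\muv,t)\| \le M$ from Assumption \ref{assumption:solution-map} to write
\begin{equation*}
    \mathcal{E}_R \ge M^{-1}\left(\int_{\mathcal{P} \times \mathcal{T}} \|\mathbf{u}(\muv,t) - \mathbf{V}\hat{\bm{q}}(\muv,t)\|^2 \, d(\muv,t)\right)^{1/2}.
\end{equation*}
Next, since $\mathbf{V}$ has orthonormal columns, $\mathbf{V}\mathbf{V}^T$ is the orthogonal projector onto $\mathrm{Range}(\mathbf{V})$. Because $\mathbf{V}\hat{\bm{q}}(\muv,t) \in \mathrm{Range}(\mathbf{V})$ pointwise, the orthogonal projection gives the nearest point in that subspace, so
\begin{equation*}
    \|\mathbf{u}(\muv,t) - \mathbf{V}\hat{\bm{q}}(\muv,t)\|^2 \ge \|\mathbf{u}(\muv,t) - \mathbf{V}\mathbf{V}^T\mathbf{u}(\muv,t)\|^2
\end{equation*}
for every $(\muv,t)$, and the inequality carries through the integral.

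Finally, I would invoke the optimality characterization of $\mathbf{V}_{\infty}$ derived in Subsection \ref{subsec:POD-analysis}, which yields
\begin{equation*}
    \int_{\mathcal{P} \times \mathcal{T}} \|\mathbf{u}(\muv,t) - \mathbf{V}\mathbf{V}^T\mathbf{u}(\muv,t)\|^2 \, d(\muv,t) \ge \int_{\mathcal{P} \times \mathcal{T}} \|\mathbf{u}(\muv,t) - \mathbf{V}_{\infty}\mathbf{V}_{\infty}^T\mathbf{u}(\muv,t)\|^2 \, d(\muv,t) = \sum_{k>N} \sigma_{k,\infty}^2.
\end{equation*}
Chaining the three inequalities gives $\mathcal{E}_R \ge M^{-1}\sqrt{\sum_{k>N}\sigma_{k,\infty}^2}$, and recognising $m^{-1}\sqrt{\sum_{k>N}\sigma_{k,\infty}^2} = \mathcal{E}_{POD,\infty}$ produces the claimed factor $m/M$.

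I do not anticipate any serious obstacle here: the only delicate point is noticing that no assumption on $\hat{\bm{q}}$ is needed beyond $\mathbf{V}\hat{\bm{q}} \in \mathrm{Range}(\mathbf{V})$, which makes the pointwise projection argument immediate and frees the bound from any dependence on the network architecture; the rest is essentially bookkeeping built on the optimality of $\mathbf{V}_{\infty}$ already established in Subsection~\ref{subsec:POD-analysis}.
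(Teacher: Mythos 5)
Your proof is correct and uses exactly the same three ingredients as the paper's own argument — the pointwise optimality of the orthogonal projector $\mathbf{V}\mathbf{V}^T$ over any $\mathbf{V}\hat{\bm{q}}$ in $\mathrm{Range}(\mathbf{V})$, the bound $\|\mathbf{u}(\muv,t)\|\le M$ to pass between the weighted and unweighted $L^2$ norms, and the optimality of $\mathbf{V}_{\infty}$ giving $\int\|\mathbf{u}-\mathbf{V}\mathbf{V}^T\mathbf{u}\|^2\,d(\muv,t)\ge\sum_{k>N}\sigma_{k,\infty}^2$. The only difference is that you chain the inequalities forward from $\mathcal{E}_R$ while the paper works backward from $(\mathcal{E}_{POD,\infty})^2$; this is purely cosmetic.
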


\begin{proof}
    We immediately notice that, by optimality of projection coefficients,
    \begin{equation*}
        \label{eq:lower_bound_overall}
        \begin{aligned}
         \mathcal{E}_R &= \int_{\mathcal{P} \times \mathcal{T}} \frac{\|\mathbf{u}(\muv, t)- \mathbf{V}\hat{\bm{q}}(\muv, t)\|}{\|\mathbf{u}(\muv, t)\|} d(\muv,t) \ge \int_{\mathcal{P} \times \mathcal{T}} \frac{\|\mathbf{u}(\muv, t) - \mathbf{V}\mathbf{V}^T\mathbf{u}(\muv, t)\|}{\|\mathbf{u}(\muv, t)\|} d(\muv,t),
         \end{aligned}
    \end{equation*}
    where we recall that $\mathbf{V}$ is the POD matrix computed via SVD using the discrete formulation and $ \mathbf{V}_{\infty}$ is relative to the continuous formulation.
    Then,
    \begin{equation*}
        \label{eq:lower_bound_first_term}
        \begin{aligned}
        (\mathcal{E}_{POD,\infty})^2 & =m^{-2} \sum_{k>N} \sigma_{k,\infty}^2 \\ &= m^{-2}\int_{\mathcal{P} \times \mathcal{T}}\|\mathbf{u}(\muv, t) -\mathbf{V}_{\infty}\mathbf{V}_{\infty}^T\mathbf{u}(\muv, t)\| d(\muv,t) \\ 
        & \le m^{-2}\int_{\mathcal{P} \times \mathcal{T}}\|\mathbf{u}(\muv, t) -\mathbf{V}\mathbf{V}^T\mathbf{u}(\muv, t)\|^2 d(\muv,t) \\
        & = m^{-2}\int_{\mathcal{P} \times \mathcal{T}} \frac{\|\mathbf{u}(\muv, t) -\mathbf{V}\mathbf{V}^T\mathbf{u}(\muv, t)\|^2}{\|\mathbf{u}(\muv, t)\|^2} \|\mathbf{u}(\muv, t)\|^2 d(\muv,t) \\
        & \le \frac{M^2}{m^2} \int_{\mathcal{P} \times \mathcal{T}} \frac{\|\mathbf{u}(\muv, t) -\mathbf{V}\mathbf{V}^T\mathbf{u}(\muv, t)\|^2}{\|\mathbf{u}(\muv, t)\|^2} d(\muv,t) \\
        & \le \frac{M^2}{m^2} (\mathcal{E}_R)^2,
        \end{aligned}
    \end{equation*}
    from which the thesis follows.
\end{proof}

\begin{remark}
    Since $\mathbf{V}_{\infty}$ is not available in practice, we cannot compute exactly $\mathcal{E}_{POD,\infty}$. In practice we can use a stricter bound: leveraging on quantities emerging from the proof, we actually employ
    \begin{equation*}
        \tilde{\mathcal{E}}_{POD} := m^{-1} \biggl( \int_{\mathcal{P} \times \mathcal{T}}\|\mathbf{u}(\muv, t) -\mathbf{V}\mathbf{V}^T\mathbf{u}(\muv, t)\|^2 d(\muv,t) \biggr)^{1/2}
    \end{equation*}
    when we either compute analytically (if possible) or estimate via Monte-Carlo.
\end{remark}

This result states that no matter how accurate the neural networks approximation is the relative error $\mathcal{E}_R$ is still bounded from below by the variance that is not explained by the POD projection. Additionally, the lower bound does not depend on how much data we gather for the supervised training phase. Of note, this is in agreement with the results provided in the analysis of other linear decomposition-based techniques, such as DeepONets \cite{deeponets_bounds_2021}.

\subsection{Upper bound for the relative error}
On the basis of the error decomposition and the perfect embedding hypothesis, we aim at providing the main result of this work, which is contained in the Theorem \ref{thm:upper_bound} and is endowed with a \textit{constructive} proof founded on the approximation results of \cite{yarotsky2017}. We remark that the present result is only valid for POD-DL-ROMs.

\begin{theorem}
    \label{thm:upper_bound}
    Let $\mathcal{G}: (\muv,t) \mapsto \mathbf{u}(\muv,t)$ for any $(\muv,t) \in \mathcal{P} \times \mathcal{T}$ be the parameter-to-solution map and suppose valid Assumptions \ref{assumption:sampling} and \ref{assumption:solution-map}. Let $\delta > 0$ and $ 0 < \varepsilon < 1$; suppose to have collected $N_{data} = N_{data}(\delta,\varepsilon)$ data samples into the snapshot matrix $\mathbf{U} \in \mathbb{R}^{N_h \times N_{data}}$. 
    Consider the $(\mathcal{P} \times \mathcal{T})$-discrete correlation matrix $\mathbf{K} = |\mathcal{P} \times \mathcal{T}| N_{data}^{-1}\mathbf{U}\mathbf{U}^T \in \mathbb{R}^{N_h \times N_h}$ and let $\sigma_{k}^2$ be its eigenvalues.
    Moreover, choose
    \begin{equation*}
        N = \arg\min \biggl\{j \in \mathbb{N}: \sum_{k>j} \sigma_{k}^2 \le \frac{m^2}{9} \varepsilon^2 \biggl\}.
    \end{equation*}
    We define the parameter-to-POD-coefficients map $\mathcal{Q}: (\muv,t) \mapsto \bm{q}(\muv,t)$ as $\mathcal{Q}(\muv,t) = \mathbf{V}^T \mathcal{G}(\muv,t)$ for any $ (\muv,t) \in \mathcal{P} \times \mathcal{T}$, where $\mathbf{V} \in \mathbb{R}^{N_h \times N}$ is the reduced rank-$N$ POD matrix computed via SVD.
    We assume that there exists $n>0$, $\Psi_*: \mathbb{R}^n \rightarrow \mathbb{R}^N, \Psi_*': \mathbb{R}^N \rightarrow \mathbb{R}^n$ that are respectively $s$-times and $s'$-times differentiable (with $s \gg s' \ge 2$), such that they enjoy the perfect embedding assumption stated in Definition  \ref{def:perfect_embedding}, namely
    \begin{equation*}
        \Psi_*(\Psi'_*(\bm{q}(\muv,t)) = \bm{q}(\muv,t) \qquad \forall (\muv,t) \in \mathcal{P} \times \mathcal{T}.
    \end{equation*}
    We let
    \begin{equation*}
        C_1 = \sup_{|\bm{\alpha}| \le s'} \sup_{\mathbf{v} \in \mathbb{R}^N} |D^{\bm{\alpha}} \Psi_*'(\mathbf{v})| \qquad C_2 = \sup_{|\bm{\alpha}| \le s} \sup_{\mathbf{w} \in \mathbb{R}^n} |D^{\bm{\alpha}} \Psi_*(\mathbf{w})|.
    \end{equation*}
    Then, there exists a constant $c = c(\mathcal{P},\mathcal{T},L,C_1,C_2,p,n,s,s')$ and a POD-DL-ROM architecture $\mathbf{V} \hat{\bm{q}} = \mathbf{V} \psi \circ \phi : \mathbb{R}^{p + 1} \rightarrow \mathbb{R}^{N}$ composed of a decoder $\psi : \mathbb{R}^{n} \rightarrow \mathbb{R}^{N}$ having at most:
    \begin{itemize}
        \item $L_{n \rightarrow N} = c\log(\varepsilon^{-1})$ layers,
        \item $w_{n \rightarrow N} = c N \varepsilon^{-n/(s-1)} \log(\varepsilon^{-1})$ active weights, 
    \end{itemize}
    and a reduced map $\phi: \mathbb{R}^{p + 1} \rightarrow \mathbb{R}^{n}$ having at most:
    \begin{itemize}
        \item $L_{(p+1) \rightarrow n} = c\log(\varepsilon^{-1})$ layers,
        \item $w_{(p+1) \rightarrow n} = c n \varepsilon^{-(p+1)} \log(\varepsilon^{-1})$ active weights,
    \end{itemize}
    such that $\mathbb{P}\{\mathcal{E}_R < \varepsilon\} > 1 - \delta$.
\end{theorem}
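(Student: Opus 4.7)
The plan is to invoke the error decomposition from Theorem~\ref{thm:error_decomposition}, $\mathcal{E}_R \le \mathcal{E}_S + \mathcal{E}_{POD} + \mathcal{E}_{NN}$, and to drive each of the three contributions below $\varepsilon/3$. The POD term is controlled by construction: the choice of $N$ in the statement immediately gives $\mathcal{E}_{POD} = m^{-1}\sqrt{\sum_{k>N}\sigma_k^2} \le \varepsilon/3$. For the sampling term, the bound $\mathbb{E}[\mathcal{E}_S] = O(N_s^{-1/4}+N_t^{-1/2})$ established in Theorem~\ref{thm:error_decomposition}, combined with Markov's inequality, lets me pick $N_s$ and $N_t$ (and hence $N_{data}$) as explicit functions of $\delta$ and $\varepsilon$ so that $\mathbb{P}\{\mathcal{E}_S \le \varepsilon/3\} \ge 1-\delta$. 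This is the only stochastic part of the estimate; everything afterwards is deterministic.

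The heart of the argument is the construction of $\phi$ and $\psi$ driving $\mathcal{E}_{NN}$ below $\varepsilon/3$. I introduce the idealized reduced map $\phi_*(\muv,t) := \Psi_*'(\bm{q}(\muv,t))$, so that by the perfect embedding assumption $\bm{q}(\muv,t) = \Psi_*(\phi_*(\muv,t))$. The triangle inequality yields
\[
\|\bm{q}(\muv,t) - \psi(\phi(\muv,t))\| \le \|\Psi_*-\psi\|_{L^\infty} + \mathrm{Lip}(\psi)\,\|\phi_*(\muv,t) - \phi(\muv,t)\|,
\]
which splits $\mathcal{E}_{NN}$ (up to the factor $m^{-1}|\mathcal{P}\times\mathcal{T}|^{1/2}$) into a decoder approximation term and a reduced-map approximation term.

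For the decoder I would apply Gühring's theorem componentwise to $\Psi_* \in C^{s}$ in the $W^{1,\infty}$ norm, obtaining a network $\psi$ with $O(N\eta_\psi^{-n/(s-1)}\log(1/\eta_\psi))$ active weights and, crucially, $\mathrm{Lip}(\psi)\le C_2+1$ via the $W^{1,\infty}$ derivative bound. For the reduced map, the Lipschitz continuity of $\mathcal{G}$ (Assumption~\ref{assumption:solution-map}) and the boundedness of the derivatives of $\Psi_*'$ (through $C_1$) ensure that $\phi_* = \Psi_*'\circ\mathbf{V}^T\mathcal{G}$ is Lipschitz on the compact set $\mathcal{P}\times\mathcal{T}$; Yarotsky's theorem with Sobolev order $s=1$ and $b=p+1$, applied to each of the $n$ output components, produces a ReLU DNN $\phi$ with $O(n\eta_\phi^{-(p+1)}\log(1/\eta_\phi))$ weights and $\|\phi_*-\phi\|_{L^\infty}<\eta_\phi$. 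Choosing $\eta_\psi$ and $\eta_\phi$ proportional to $\varepsilon$, with proportionality constants absorbing $m$, $|\mathcal{P}\times\mathcal{T}|$, $L$, $C_1$, $C_2$ into the single constant $c$, closes the estimate $\mathcal{E}_{NN}\le\varepsilon/3$ and recovers the claimed bounds on layers and active weights.

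The main obstacle I foresee is precisely the compounding of the two approximation errors under composition: this is what forces approximation of the decoder in $W^{1,\infty}$ rather than $L^\infty$, so that $\mathrm{Lip}(\psi)$ stays uniformly bounded, and it explains the exponent $n/(s-1)$ in the decoder's complexity. A secondary subtlety is the affine rescaling required to transport the compact domains $\mathcal{P}\times\mathcal{T}$ and $\phi_*(\mathcal{P}\times\mathcal{T})$ into the reference cube $[0,1]^{b}$ in which Yarotsky's and Gühring's theorems are stated; this rescaling leaves the asymptotic rates unchanged but contributes to the constant $c$, and must be carried out before one can apply the approximation theorems componentwise.
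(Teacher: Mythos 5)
Your proposal is correct and follows essentially the same route as the paper: the $\varepsilon/3$-splitting of the error decomposition, the choice of $N$ to kill $\mathcal{E}_{POD}$, a probabilistic argument for $\mathcal{E}_S$ (you use Markov's inequality on $\mathbb{E}[\mathcal{E}_S]$ where the paper invokes the Weak Law of Large Numbers — both are fine and yield the same conclusion), and the identical composition argument for $\mathcal{E}_{NN}$, with Gühring's theorem applied to $\Psi_*$ in $W^{1,\infty}$ to keep $\mathrm{Lip}(\psi)$ uniformly bounded and Yarotsky's theorem applied to the Lipschitz map $\phi_* = \Psi_*'\circ\mathcal{Q}$. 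Your identification of the $W^{1,\infty}$ control of the decoder as the crux of the composition step, and your remark on the affine rescaling to the reference cube, match (and slightly sharpen) the paper's own treatment.
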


\begin{proof}
    We immediately notice that, choosing $N$ as in the theorem statement, we derive
    \begin{equation*}
        \label{eq:bound_pod_error}
        \mathcal{E}_{POD} = m^{-1} \sqrt{\sum_{k>N} \sigma_{k}^2} \le \frac{\varepsilon}{3}.
    \end{equation*}
    Then, we aim at bounding $\mathcal{E}_{S} = \mathcal{E}_{S}(N_s, N_t)$; under the Assumption \ref{assumption:sampling}, by the Weak Law of Large Numbers \cite{jacod_protter} we can infer the following statement:
    \begin{equation*}
        \forall \delta > 0, \quad \forall 0 < \varepsilon < 1, \quad \exists N_s, N_t:  \mathbb{P}\{\mathcal{E}_{S}(N_s, N_t) < \varepsilon/3\} > 1 - \delta.
    \end{equation*}
    Then, we are left to bound $\mathcal{E}_{NN}$:
    by means of the Cauchy-Schwarz and the H\"older inequalities, considering that $\|\mathbf{V}\|^2 = 1$, it is trivial that
    \begin{equation}
    \label{eq:upper_bound_E_NN}
        \begin{aligned}
         \mathcal{E}_{NN} &= \biggl(\int_{\mathcal{P} \times \mathcal{T}}\frac{\|\mathbf{V}\bm{q}(\muv,t) - \mathbf{V}\hat{\bm{q}}(\muv, t)\|^2}{\|\mathbf{u}(\muv, t)\|^2} d(\muv,t)\biggr)^{1/2} \\ &\le m^{-1} \biggl(\int_{\mathcal{P} \times \mathcal{T}}\|\mathbf{V}\bm{q}(\muv,t) - \mathbf{V}\hat{\bm{q}}(\muv, t)\|^2 d(\muv,t)\biggr)^{1/2} \\ &\le m^{-1} \biggl(|\mathcal{P} \times \mathcal{T}| \sup_{(\muv,t) \in \mathcal{P} \times \mathcal{T}} \|\mathbf{V}\bm{q}(\muv,t) - \mathbf{V}\hat{\bm{q}}(\muv, t)\|^2 \biggr)^{1/2} 
        \\ &\le m^{-1} \biggl(|\mathcal{P} \times \mathcal{T}| \|\mathbf{V}\|^2\sup_{(\muv,t) \in \mathcal{P} \times \mathcal{T}} \|\bm{q}(\muv,t) - \hat{\bm{q}}(\muv, t)\|^2 \biggr)^{1/2} 
        \\ &= m^{-1} |\mathcal{P} \times \mathcal{T}|^{1/2} \sup_{(\muv,t) \in \mathcal{P} \times \mathcal{T}} \|\bm{q}(\muv,t) - \hat{\bm{q}}(\muv, t)\| ,
        \end{aligned}
    \end{equation}
    Therefore, we are left to bound the error due to the neural network approximation of the map $\mathcal{Q}$, namely
    \begin{equation*}
        \sup_{(\muv,t) \in \mathcal{P} \times \mathcal{T}} \|\bm{q}(\muv,t) - \hat{\bm{q}}(\muv, t)\|.
    \end{equation*}
    Firsly, we notice that we can take $n \le 2p + 3$, since $\mathcal{G}$ (and consequently $\mathcal{Q}$) is Lipschitz-continuous (see \textit{Theorem 3} in \cite{dlrom_bounds}). Then, we proceed as in \cite{dlrom_bounds}, by employing two different steps:
    \begin{itemize}
        \item Consider the reduced manifold $\mathcal{S}_N := \{\bm{q} = \mathcal{Q}(\muv, t): (\muv,t) \in \mathcal{P} \times \mathcal{T}\}$; then $\mathcal{V}_n = \Psi'_*(S_N)$ is such that $\diam(\mathcal{V}_N) \le LC_1 \diam(\mathcal{P} \times \mathcal{T})$, thanks to the Lipschitz-continuity hypothesis provided by Assumption \ref{assumption:solution-map}. Thus, by Theorem due to Gühring et al. \cite{guhring_relu_2019} recalled in Section~\ref{sec:1_2}, there exists a ReLU DNN $\psi: \mathbb{R}^n \rightarrow \mathbb{R}^N$ such that
        \begin{equation}
            \label{eq:bound_decoder}
            \begin{aligned}
            &\sup_{\mathbf{v} \in \mathcal{V}_n} \|\psi(\mathbf{v}) - \Psi_*(\mathbf{v})\| < \frac{m}{6}|\mathcal{P} \times \mathcal{T}|^{-1/2} \varepsilon
            \\
            &\esssup_{\mathbf{v}, \mathbf{v}' \in \mathcal{V}_n} \frac{|(\psi - \Psi_*)(\mathbf{v}) - (\psi - \Psi_*)(\mathbf{v}')|}{|\mathbf{v} - \mathbf{v}'|} < \frac{m}{6}|\mathcal{P} \times \mathcal{T}|^{-1/2} \varepsilon,
            \end{aligned}
        \end{equation}
        with $L_{n \rightarrow N} = c\log(\varepsilon^{-1})$ layers and $w_{n \rightarrow N} = c N \varepsilon^{-n/(s-1)} \log(\varepsilon^{-1})$ active weights. Notice that the Lipschitz constant of $\psi$ is bounded by the quantity $C_3 = C_2 + \frac{m}{6} |\mathcal{P} \times \mathcal{T}|^{-1/2}$;
        \item Setting $\phi_*(\muv,t) = \Psi'_*(\bm{q}(\muv,t)) \quad \forall (\muv,t) \in \mathcal{P} \times \mathcal{T}$, we notice that it is Lipschitz-continuous, with constant bounded by $LC_1$, and thus, by the Theorem due to Yarotski \cite{yarotsky2017} recalled in Section~\ref{sec:1_1}, there exists a ReLU DNN $\phi: \mathbb{R}^{p+1} \rightarrow \mathbb{R}^n$ such that
        \begin{equation}
            \label{eq:bound_reduced}
            \sup_{(\muv,t) \in \mathcal{P} \times \mathcal{T}} \|\phi(\muv,t) - \phi_*(\muv,t)\| < \frac{m}{6 C_3}|\mathcal{P} \times \mathcal{T}|^{-1/2} \varepsilon,
        \end{equation}
        with $L_{(p+1) \rightarrow n} = c\log(\varepsilon^{-1})$ layers and $w_{(p+1) \rightarrow n} = c n \varepsilon^{-(p+1)} \log(\varepsilon^{-1})$ active weights.
    \end{itemize}
    Moreover, let $\hat{\bm{q}} = \psi \circ \phi : \mathbb{R}^{p + 1} \rightarrow \mathbb{R}^{N}$ be the underlying neural network of the POD-DL-ROM.
    Then, by means of the triangular inequality, the perfect embedding Assumption, the definition of $\phi_*$, and the Lipschitz-continuity of $\psi$, we derive:
    \begin{equation*}
        \label{eq:bound_network_error_2}
        \begin{aligned}
        &\sup_{(\muv,t) \in \mathcal{P} \times \mathcal{T}} \|\bm{q}(\muv,t) - \hat{\bm{q}}(\muv, t)\| \\
        \le &\sup_{(\muv,t) \in \mathcal{P} \times \mathcal{T}} (\|\Psi_*(\Psi_*'(\bm{q}(\muv,t)) - \psi(\phi_*(\muv,t))\| + \|\psi(\phi_*(\muv,t)) - \psi(\phi(\muv,t))\|) \\
        \le & \sup_{\mathbf{v} \in \mathcal{V}_N} \|\psi(\mathbf{v}) - \Psi_*(\mathbf{v})\| + C_3 \sup_{(\muv,t) \in \mathcal{P} \times \mathcal{T}} \|\phi(\muv,t) - \phi_*(\muv,t)\| < \frac{m}{3}|\mathcal{P} \times \mathcal{T}|^{-1/2} \varepsilon,
        \end{aligned}
    \end{equation*}
    employing the bounds \eqref{eq:bound_decoder} and \eqref{eq:bound_reduced}.
    Then, plugging the last inequality in \eqref{eq:upper_bound_E_NN} we can state that $\mathcal{E}_{NN} < \frac{\varepsilon}{3}$.
    Finally, by means of the error decomposition formula, we derive the desired bound
    \begin{equation*}
        \mathcal{E}_R \le \mathcal{E}_{POD} + \mathcal{E}_S + \mathcal{E}_{NN} < \frac{\varepsilon}{3} + \frac{\varepsilon}{3} + \frac{\varepsilon}{3} = \varepsilon,
    \end{equation*}
    with probability greater than $1 - \delta$.
\end{proof}

\begin{remark}
The DL-ROM paradigm proposed in \cite{dlrom_2021} and applied to cardiac electrophysiology in \cite{dlrom_electrophys}, has been  theoretically analyzed in \cite{dlrom_bounds}, providing approximation bounds and a complexity analysis, which shows that in general DL-ROMs suffer from \textit{curse of dimensionality} with respect the number of high-fidelity dofs $N_h$. Relying on the present Theorem \ref{thm:upper_bound}, we demonstrate how the preliminary dimensionality reduction through POD affects both the complexity of the POD-DL-ROM and its approximation capabilities. 
Indeed, POD-DL-ROMs avoid the curse of dimensionality of the DL-ROMs at the cost of discarding the \textit{small scales} contribution, which might be however relevant when considering, e.g., highly nonlinear problems showing a slow eigenvalue decay. On the other hand, POD-DL-ROMs provide a neural network architecture with a lower number of \textit{trainable} weights, thus yielding a lighter training procedure in practice. Finally, we can highlight that the \textit{a priori} choice of employing DL-ROMs or POD-DL-ROMs must be based exclusively on the linear reducibility of the problem and the availability of computational resources.
\end{remark}

\section{Comparative analysis with deep learning-based existing strategies}
\label{sec:five}

On the basis of the results of the previous section, we comment the advantages of POD-DL-ROMs when compared with other deep learning-based existing strategies present in the literature, namely:
\begin{itemize}
    \item simple DNNs to approximate the POD (or Kernel-POD) coefficients, that results in the widely used POD+DNN approach \cite{chen_2021,hesthaven_2018,salvador_2021,wang_2019}; \smallskip
    
    \item the POD-DeepONets architecture, which was proposed in \cite{deeponet_vs_fno_2022} and based on the classical DeepONets approach \cite{lu2021learning}; \smallskip
    
    \item the technique presented in \cite{oleary-2022}, which aims at reconstructing the parameter-to-solution map by coupling linear projection methods and residual networks and which we will hereon refer to as lin+ResNets; \smallskip
    
    \item the CNNs architecture for operator learning proposed in \cite{appx_convenet_2022}, whose analysis is based on the Fourier decomposition.
\end{itemize}

\subsection{POD-DL-ROMs vs POD+DNNs: a matter of regularity}
\label{subsec:POD_DNN}

The purpose of this subsection is to highlight how the POD-DL-ROM approach provides a suitable setting to establish tighter bounds on the model complexity when compared to generic POD+DNNs, especially when the parameter-to-solution map is not regular.

It is worth to remark that, under the hypothesis of Theorem \ref{thm:upper_bound}, the number of layers of the POD-DL-ROM network architecture is expected to scale as
\begin{equation*}
    L_{POD-DL-ROM} = O(\log(\varepsilon^{-1})),
\end{equation*}
while the total number of active weights behaves as
\begin{equation*}
    \begin{aligned}
        w_{POD-DL-ROM} &= w_{n \rightarrow N} + w_{(p+1) \rightarrow n} \\
        &= O(N \varepsilon^{-n/(s-1)} \log(\varepsilon^{-1})) + O(n \varepsilon^{-(p+1)} \log(\varepsilon^{-1})).
    \end{aligned}
\end{equation*} 

We expect that, in general  $n \ll N = N(\varepsilon)$; moreover, $n \le 2p + 3$ since the parameter-to-POD-coefficients map is Lipschitz-continuous, due to Assumption \ref{assumption:solution-map}. Thus, it is evident that the majority of the neural network complexity amounts to the decoder, which has to perform the most difficult task, namely, decoding the information provided by the latent coordinates. Instead, the reduced network only aims at providing an alternative representation of the time-parameters vector $(\muv,t)$ such that it makes as easy as possible for the decoder to reconstruct the POD coefficients. Noting that $w_{POD-DL-ROM}$ depends exponentially on $s$, we can control the complexity of the POD-DL-ROM by choosing $s$ as large as possible, namely, $s \gg s' \ge 2$.

Essentially, we aim at  finding a representation of POD coefficients of the form
\begin{equation}
\label{eq:perfect_embedding}
    \Psi_*(\Psi'_*(\bm{q}(\muv,t)) = \bm{q}(\muv,t) \qquad \forall (\muv,t) \in \mathcal{P} \times \mathcal{T},
\end{equation}
through the composition of an encoder $\Psi'_*$ that absorbs all the irregularity of the identity map $\mathcal{I} = \Psi_* \circ \Psi'_*$, and a decoder $\Psi_*$ that is extremely regular.
We highlight that the perfect embedding Assumption stated in Definition \ref{def:perfect_embedding} is critical; indeed, under the hypothesis of Theorem \ref{thm:upper_bound}, leaving out only the perfect embedding assumption, we may be tempted to trivially use Yarotski's Theorem \cite{yarotsky2017} to construct a ReLU DNN which has $L_{POD+DNN}$  layers and  $w_{POD+DNN}$ active weights, where
\begin{equation*}
    \begin{aligned}
        &L_{POD+DNN} = O(\log(\varepsilon^{-1})) \\
        &w_{POD+DNN} = O(N \varepsilon^{-(p+1)}\log(\varepsilon^{-1})),
    \end{aligned}
\end{equation*}
in order to control the relative error with $\mathcal{E}_R < \varepsilon$. Notice that:
\begin{itemize}
    \item the number of layers $L_{POD+DNN}$ is of the same order as $L_{POD-DL-ROM}$; 
    \item the estimate of the number of active weights  $w_{POD+DNN}$ can only take advantage of mild regularity assumptions on $\mathcal{G}$ (and $\mathcal{Q}$), that is only Lipschitz-continuous.
\end{itemize}

However, it is evident that  Theorem \ref{thm:upper_bound} only provides a theoretical result offering a different perspective in order to enhance the complexity estimate of POD+DNN. Indeed, within the framework stated by Theorem \ref{thm:upper_bound}, given an accuracy level $\varepsilon$ one could take advantage of the POD-DL-ROM theoretical setting, and thus the perfect embedding Assumption, to construct a proper architecture that approximates the parameter-to-solution map keeping $\mathcal{E}_R < \varepsilon$ -- and, then, notice that the resulting architecture is indeed in general a POD+DNN. The difference in practice is represented by the training procedure. Indeed, notice that training a network like the one involved in a POD+DNN with the classical supervised loss formulation, by letting $\omega_n = 0$ in \eqref{eq:POD-DL-ROM-loss} and thus without taking advantage of the encoder, does not ensure to recover an adequate representation in the latent space. Instead, if we train the network relying on thje POD-DL-ROM paradigm, namely taking $\omega_n > 0$ in \eqref{eq:POD-DL-ROM-loss}, we actually employ the encoder to implicitly enforce the architecture to satisfy the perfect embedding Assumption, and then discard the encoder in the online testing phase.

Suppose now that $N \gg n$: trivially, we have that $w_{DNN} \gtrsim w_{(p+1) \rightarrow n}$; moreover, $w_{POD+DNN} \gtrsim w_{n \rightarrow N}$, upon requiring that $\frac{n}{(s-1)} < p + 1$, that provides an estimate for the regularity of the decoder in the representation \eqref{eq:perfect_embedding}, that is $s > \frac{n}{p + 1} + 1$. In practice, given that $n \le 2p + 3$, we can safely assume that $s \gtrsim 3 + \frac{1}{p + 1} $ and finally $s \gtrsim 4$. Thus, if the parameter-to-solution map $\mathcal{G}$ is only Lipschitz-continuous, if the perfect embedding Assumption is satisfied for $s \ge 4$, POD-DL-ROMs achieve a tighter bound on the model complexity when compared to general POD+DNN approaches: this is due to the fact that there exists a better representation (in terms of regularity) $\phi_*(\muv,t)$ for the time-parameters vector $(\muv,t)$ that can be recovered by the reduced network.

Until now, we considered the case where the parameter-to-solution map is only Lipschitz-continuous; however, it is interesting to consider cases where we can verify that the map $\mathcal{G}$ shows higher regularity, and see how this increased regularity affects the complexity of both POD-DL-ROMs and POD+DNNs in terms of number of active weights. Indeed, by means of similar arguments employed previously, and thanks to the Theorem due to Yarotski \cite{yarotsky2017} recalled in Section~\ref{sec:1_1}, assuming that $\mathcal{G} \in W^{r,+\infty}(\mathcal{P} \times \mathcal{T}; \mathbb{R}^{N_h})$, we obtain that 
\begin{equation*}
    \begin{aligned}
        &w_{POD+DNN} = O(N \varepsilon^{-(p+1)/r}\log(\varepsilon^{-1})).
    \end{aligned}
\end{equation*}
Thanks to the fact that the \textit{exact} reduced map $\phi_*$ of Theorem \ref{thm:upper_bound} now would be $\min\{r,s'\}$-times differentiable,
\begin{equation*}
    \begin{aligned}
         w_{POD-DL-ROM} &= w_{n \rightarrow N} + w_{(p+1) \rightarrow n} \\
        &= O(N \varepsilon^{-n/(s-1)} \log(\varepsilon^{-1})) + O(n \varepsilon^{-(p+1)/\min\{r,s'\}} \log(\varepsilon^{-1}))
    \end{aligned}
\end{equation*}
Assuming that $N \gg n$, it is trivial to verify that $w_{POD+DNN} \gtrsim w_{(p+1) \rightarrow n}$; furthermore, it is valid that $w_{POD+DNN} \gtrsim w_{n \rightarrow N}$ if $\frac{n}{(s-1)} > \frac{p+1}{r}$, that is $s > \frac{nr}{p+1} + 1$, which gives the estimate $s \gtrsim (2 + \frac{1}{p+1})r + 1$ and finally $s \gtrsim 3r + 1$. The meaning of the last estimate is that, if the parameter-to-solution map is extremely regular (namely, $r \rightarrow \infty$), it becomes more and more difficult for the POD-DL-ROMs to guarantee lower complexity than simple POD+DNNs, since the perfect embedding Assumption should be verified for $s \rightarrow \infty$. This is rather intuitive: indeed, if the parameter-to-solution map is extremely regular, we do not need a to recover a better representation $\phi_*(\muv,t)$ for the time-parameter vector $(\muv,t)$ in order to make it easier for the underlying neural network to learn the solution manifold. 

\subsection{POD-DeepONets and POD-DL-ROMs: a comparison}
In this subsection, we aim at analyzing the POD-DeepONet architecture from a theoretical standpoint, showing the close relationship with POD-DL-ROMs when dealing with problems whose general formulation can be reduced to \eqref{eq:general_formulation_FOM}. 
We let $X$ be a Banach space and consider a compact subset $K_1 \subset X$ and a compact subset $K_2 \subset \mathbb{R}^d$, where $d$ denotes the number of spatial (or spatio-temporal) dimensions of the problem at hand. Defining $W \subset C(K_1)$ as a compact subset, we suppose that we aim at learning the operator $\mathcal{G}_{\infty \rightarrow \infty}: W \rightarrow C(K_2)$, where the subscript highlights that the considered operator is a map between infinite-dimensional spaces. We first consider a DeepONet architecture \cite{lu2021learning} employed to reconstruct $\mathcal{G}_{\infty \rightarrow \infty}$, which in its \textit{unstacked} formulation consists in the combination of the output of two different neural networks through the scalar product. In particular, we define the \textit{branch net} $\bm{b}: W \rightarrow \mathbb{R}^N$ as the neural network that processes information about the input function $\phi \in W$, and the \textit{trunk net} $\bm{\tau}: \mathbb{R}^d \rightarrow \mathbb{R}^N$, which aims at encoding the coordinate input $y \in \mathbb{R}^d$ in a set of basis functions.  Then, we can define the DeepONet approximation as
\begin{equation}
    \label{eq:deeponet_decomposition}
    \mathcal{G}_{\infty \rightarrow \infty}(\phi)(y) \approx \hat{G}(\phi)(y) = \bm{b}(\phi) \cdot \bm{\tau}(y).
\end{equation}
and note that $N$ describes the number of basis functions employed in the decomposition \eqref{eq:deeponet_decomposition}; thus, $N$ plays the same role as the POD dimension in the POD-DL-ROM architecture.
Based on the analysis proposed in \cite{deeponets_bounds_2021}, we can split the DeepONet operator $\hat{G}: W \rightarrow C(K_2)$ into $\hat{G} = \mathcal{R}_{\tau} \circ \mathcal{A}_{m \rightarrow N} \circ \mathcal{E}_m$, where $\mathcal{E}_m$, $\mathcal{A}_{m \rightarrow N}$ and $\mathcal{R}_{\tau}$ are defined as follows:
\begin{itemize}
    \item the \textit{encoder} operator is defined as the map $\mathcal{E}_m: C(K) \rightarrow \mathbb{R}^m$, such that, given $x_i \in K_1, \forall i=1,\ldots,m$:
    \begin{equation*}
        \mathcal{E}_m(\psi) = [\psi(x_1), \psi(x_2), ..., \psi(x_m)]^T \qquad \forall \psi \in C(K_1).
    \end{equation*}
    It is worth to notice that $\mathcal{E}_m$ is well defined since any continuous function can be evaluated pointwise; \smallskip
    
    \item $\mathcal{A}_{m \rightarrow N}: \mathbb{R}^m \rightarrow \mathbb{R}^N$ is the \textit{approximation} operator; thus, we can decompose the branch net of the DeepONet operator as $\bm{b} = \mathcal{A}_{m \rightarrow N} \circ \mathcal{E}_m$; \smallskip
    
    \item recalling that $\bm{\tau}: \mathbb{R}^d \rightarrow \mathbb{R}^N$ is the trunk net, we define the $\tau$-induced \textit{reconstructor} operator $\mathcal{R}_{\tau}: \mathbb{R}^N \rightarrow C(K_2)$ as
    \begin{equation*}
        \mathcal{R}_{\tau}(\bm{\xi}) = \bm{\xi} \cdot \bm{\tau} \qquad \forall \bm{\xi} \in \mathbb{R}^N.
    \end{equation*}
\end{itemize}
In a more compact formulation, we retrieve the classical architecture of the DeepONets, namely:
\begin{equation*}
    \hat{G}(\phi) = \mathcal{R}_{\tau} \circ (\mathcal{A}_{m \rightarrow N} \circ \mathcal{E}_m(\phi)) = \mathcal{R}_{\tau} \circ \bm{b}(\phi) = \bm{b}(\phi) \cdot \bm{\tau}.
\end{equation*}
 POD-DeepONets were recently introduced in \cite{deeponet_vs_fno_2022} and the test cases considered within the paper confirm better approximation accuracy when compared with classical DeepONets: the methodology consists in substituting the \textit{trunk net} with the corresponding row of the POD matrix. The drawback is that POD-DeepONets can only approximate operators defined as $\mathcal{G}_{\infty \rightarrow N_h}: W \rightarrow \mathbb{R}^{N_h}$, losing the capability of mapping between infinite-dimensional spaces. 
 
 Supposing to initially deal with stationary, time-independent problems and denoting by $\mathbf{v}_j \in \mathbb{R}^N$ the $j$-th row of the POD matrix $\mathbf{V} \in \mathbb{R}^{N_h \times N}$, we define the \textit{expansion} operator $L_{\mathbf{v}_j}: \mathbb{R}^N \rightarrow \mathbb{R}^{N_h}$ as
\begin{equation*}
   L_{\mathbf{v}_j}(\bm{\xi}) = \bm{\xi} \cdot \mathbf{v}_j \qquad \forall \bm{\xi} \in \mathbb{R}^N,
\end{equation*}
and the POD-DeepONet operator as
\begin{equation*}
    [\mathcal{G}_{\infty \rightarrow N_h}(\phi)]_j \approx [\hat{G}_{POD-DeepONet}(\phi)]_j = \hat{\bm{q}}(\phi) \cdot \mathbf{v}_j = L_{\mathbf{v}_j} \circ \mathcal{A}_{m \rightarrow N} \circ \mathcal{E}_m(\phi),
\end{equation*}
$\forall j=1,\ldots,N_h$, where $\hat{\bm{q}}$ is the corresponding \textit{branch net}, which now approximates the underlying POD coefficients. It is worth to notice that, by employing the vector formulation, we can write:
\begin{equation*}
    \mathcal{G}_{\infty \rightarrow N_h}(\phi) \approx \hat{G}_{POD-DeepONet}(\phi) = \mathbf{V} \hat{\bm{q}}(\phi).
\end{equation*} 
Then, we need to adapt the POD-DeepONet framework to the problem considered within this work \eqref{eq:general_formulation_FOM}, where even the input parameter space is finite-dimensional, thus eliminating the need of the \textit{encoder} operator $\mathcal{E}_m$. Indeed, POD-DeepONets for finite-dimensional-input problems involving the reconstruction of the map $\mathcal{G}_{p \rightarrow N_h}: \mathbb{R}^p \rightarrow \mathbb{R}^{N_h}$ take the form
\begin{equation*}
    [\mathcal{G}_{p \rightarrow N_h}(\muv)]_j \approx [\hat{G}_{POD-DeepONet}(\muv)]_j = \hat{\bm{q}}(\muv) \cdot \mathbf{v}_j = L_{\mathbf{v}_j} \circ \mathcal{A}_{p \rightarrow N},
\end{equation*}
$\forall j=1,\ldots,N_h$, or in a more compact way 
\begin{equation*}
    \mathcal{G}_{p \rightarrow N_h}(\muv) \approx \hat{G}_{POD-DeepONet}(\muv) = \mathbf{V} \hat{\bm{q}}(\muv),
\end{equation*}
where $\muv \in \mathcal{P} \subset \mathbb{R}^{p}$, $\mathcal{P}$ compact. It is worth to notice that in this case the \textit{branch net} coincides with the \textit{approximation} operator $\mathcal{A}_{m \rightarrow N}$.

Finally, in order to include also the time-dependence, we could adopt two different strategies:
\begin{itemize}
    \item we could treat the time $t$ as a spatial coordinate in a DeepONet-like way, leading to a POD matrix of dimension $N_hN_t \times N$, that however increases the possible impact of the \textit{curse of dimensionality}, however offering the opportunity to deal with time-dependent basis functions; 
    \item alternatively, we may consider the time $t$ as an additional parameter, a choice which reduces the computational requirements and is consistent with the POD-DL-ROM approach, leading to the construction of time-independent global spatial basis functions.
\end{itemize}

Within this comparison, for the sake of consistency, we choose to employ this latter approach. Thus, aiming at reconstructing the map $(\muv,t) \mapsto \mathbf{u}(\muv,t)$, we could employ different neural network architectures; for instance, if we choose to employ a DL-ROM architecture as the \textit{branch net} of the POD-DeepONets, we retrieve the POD-DL-ROM approach, while employing a \textit{vanilla} DNN as the \textit{branch net} results in the POD+DNN approach. The comparison between POD-DL-ROM and POD+DNNs is extensively treated in the previous subsection.

Finally, inspired by the DeepONet approach, we notice that extending the content of the present paper to the case of  infinite-dimensional input parameters is straightforward and introduces an additional source of error, namely the \textit{encoding} error, that ultimately depends on the variability of the input parameters and their spatial discretization; for a thorough discussion on the topic, we refer the reader to, e.g., \cite{deeponets_bounds_2021}.

\subsection{Learning POD coefficients with ResNets}
The ResNets-based approach proposed in \cite{oleary-2022} couples linear decompositions and residual networks (ResNets) to reconstruct field-to-solution maps, an approach which is inherently close to POD-DL-ROMs. In this case, we start our analysis of the technique by examining the proposed architecture, and by adapting it to the problem formulation considered within the present work. 

Indeed, we immediately notice that the lin+ResNet architecture needs that every residual layer has input dimension equal to the output dimension layer output dimension: iterating, for a fully residual network, we must require that the input of the network has the same dimension of the network output. Such a constraint in the architecture is managed in \cite{oleary-2022} by projecting both the input fields and the output targets onto two linear subspace of equal dimension $N \ll N_h$, where $N_h$ is the FOM dimension. Then, the output targets are numerically approximated on the same mesh and projected onto a subspace of dimension $N$, too. The approach results in the sequence of maps:
\begin{equation*}
    \mathbb{R}^{N_h} \xrightarrow{lin. proj.} \mathbb{R}^N \xrightarrow{residual} \mathbb{R}^N \xrightarrow{residual}... \xrightarrow{residual} \mathbb{R}^N \xrightarrow{lin. lift.} \mathbb{R}^{N_h},
\end{equation*}
where the linear projection is usually carried out by employing POD, Karhunen-Loève expansions \cite{kle_2006} or active subspaces \cite{active_subspaces_20}.
However, when dealing with finite dimensional parameter inputs instead of fields (for instance $(\muv,t) \in \mathbb{R}^{p+1}$ with $p + 1 < N$), it may occur that the ResNet input dimension ($p+1$) is different from the output dimension $N$; to fill the gap, it is necessary to employ for instance a dense layer $\mathbb{R}^{p+1} \rightarrow \mathbb{R}^{N}$ as the first layer of the architecture. Thus, we will consider the sequence of maps:
\begin{equation*}
    \mathbb{R}^{p} \xrightarrow{dense} \mathbb{R}^N \xrightarrow{residual} \mathbb{R}^N \xrightarrow{residual}... \xrightarrow{residual} \mathbb{R}^N \xrightarrow{lin. lift.} \mathbb{R}^{N_h}.
\end{equation*}

The lin+ResNets approach ultimately aims at providing a constructive way to build a neural network in terms of \textit{breadth} and \textit{depth}.

The \textit{breadth}, which may be intuitively defined as the maximum number of neurons per layer in the network, coincides with $N$, the characteristic dimension of the preliminary dimensionality reduction. In order to favour compressed representations, the authors of \cite{oleary-2022} suggest keeping as low as possible the \textit{latent} dimension $k$ of the ResNet, which can be identified with the dimension of the nonlinearity added at each layer. Indeed, the residual map between the layer $\mathbf{z}_{l} \in \mathbb{R}^N$ and $\mathbf{z}_{l+1} \in \mathbb{R}^N$ can be identified with
\begin{equation*}
    \mathbf{z}_{l+1} = \mathbf{z}_l + \mathbf{W}_{1l}\sigma(\mathbf{W}_{0l} \mathbf{z}_l + \mathbf{b}_l),
\end{equation*}
where $\mathbf{W}_{0l} \in \mathbb{R}^{N \times k}$, $\mathbf{W}_{1l} \in \mathbb{R}^{k \times N}$, $\mathbf{b}_l \in \mathbb{R}^k$ and $\sigma$ is the activation function; the total number of weights per layer is then $O(Nk)$.
However, in contrast to our approach, they did not propose a way to identify $k$: we remark that the discussion on the latent dimension $n$ of the POD-DL-ROM architecture is fundamental because it allows to set a tighter bound on the complexity of the decoder network in terms of active weights. 

Furthermore, the authors developed approximation bounds on the underlying ResNet complexity in terms of its \textit{depth}, employing the connection between ResNets, Neural ODE and control flows \cite{rtqchen_2018}. The bound on the ResNet \textit{depth} enable the user to control the $\ell^2$ error on the solution (and by extension the relative error too) with a suitable bound $\varepsilon$ by employing $O(\varepsilon^{-1})$ layers.
Thus, we can straightforwardly state that, on the basis of the complexity analysis, POD-DL-ROMs outperform the ResNets-based approach in terms of number of layers:
\begin{equation*}
    \begin{aligned}
    L_{lin+ResNets} = O(\varepsilon^{-1}) &\gtrsim O(\log(\varepsilon^{-1})) = L_{POD-DL-ROM} 
    \end{aligned}
\end{equation*}
and number of active weights:
\begin{equation*}
    \begin{aligned}
    w_{lin+ResNets} = O(Nk \varepsilon^{-1}) \gtrsim &\hspace{1mm} O(N \varepsilon^{-n/(s-1)} \log(\varepsilon^{-1})) + \\
    &+ O(n \varepsilon^{-(p+1)} \log(\varepsilon^{-1}))) = w_{POD-DL-ROM},
\end{aligned}
\end{equation*}
supposing for instance $N \gg n$ and $s \ge n + 1$, which are reasonable assumptions. Indeed, $N \gg n$ is satisfied when the nonlinear Kolmogorov $n$-width decays much faster that the eigenvalue decay of the correlation matrix, a phenomenon that is usually encountered in applications; the condition $s \ge n + 1$ is valid by ensuring $s \ge 2p + 4$, that is the decoder map must be sufficiently regular.

Despite the disadvantage on the complexity front, we remark that ResNets constitute one of the most suitable paradigms to implement adaptive-depth architectures, since adding a layer to an already trained architecture can produce an arbitrary small perturbation on the network output; for a more detailed analysis on the lin+ResNets training, we refer the reader to \cite{oleary-2022}.

\subsection{The effect of the POD basis optimality on the network complexity}

Within this subsection, our purpose is finally to show how choosing the POD basis as global spatial basis function in the linear decomposition leads to a reduced complexity of the underlying neural network, comparing in details CNNs for operator learning and POD-DL-ROMs. In particular, we notice that, within the POD-DL-ROM approach, the reconstruction of the approximated solution at the high-fidelity level depends on the decomposition assumption $\mathbf{u}(\muv,t) \approx \sum_{j < N} \hat{q}_j(\muv,t) \mathbf{v}_j$, where $N$ denotes the POD dimension. Analogously, the recent work on the approximation bounds for CNNs proposed in \cite{appx_convenet_2022} strives to reconstruct a decomposition between global spatial basis functions that are strictly related to the Fourier modes, and a set of coefficients, that is,  $\mathbf{u}(\muv,t) \approx \sum_{j<C} \hat{a}_j(\muv,t) \bm{f}_j$, where the sum is over $C$ terms (the number of channels in the input and output is $O(C)$). 

In the following, we assume that $u(\cdot,\muv,t) \in C^\alpha(\Omega)$ for any $ (\muv,t) \in \mathcal{P} \times \mathcal{T}$, being $\alpha \ge 1$ the spatial regularity, and $\varepsilon > 0$ is the desired accuracy level; we then describe the three main differences between the CNN-based approach and the POD-DL-ROM technique:
\begin{itemize}
    \item The convolutional block is limited to uniformly spaced mesh points ($h$ is the spacing parameter) in square domains, while POD-DL-ROMs are more versatile both in terms of the domain shape and the mesh properties.
    \item The architecture proposed in \cite{appx_convenet_2022} consists of two different blocks: the dense block is devoted to the parameter-dependent coefficient approximation, while the convolutional block strives to reconstruct the spatial basis function. Instead, POD-DL-ROMs compute the spatial basis before the training of neural networks by means of SVD \cite{Manzoni2016} or randomized SVD \cite{rsvd_2014} through an unsupervised learning criterion: in principle, this means that POD-DL-ROMs do not need any active weights to reconstruct the spatial basis functions, while the CNN approach needs $O(\varepsilon^{-\frac{2}{2\alpha-1}}\log(h^{-1}))$ weights to \textit{learn} them (we refer the reader to \textit{Theorem 2} in \cite{appx_convenet_2022}).
    \item In the decomposition employed in \cite{appx_convenet_2022}, $C$ plays the role of the reduced dimension: it is an analogue of the POD-dimension $N$ employed within the POD-DL-ROM technique. 
    In the following, we exploit an optimality result fulfilled by the POD basis to show that the complexity of the neural network in the parameter-to-coefficient map approximation is lower in the case of POD-DL-ROM when compared to the approach proposed in \cite{appx_convenet_2022}.
\end{itemize}

The quasi-optimality of the POD decomposition in its discrete formulation confirms that with a $N$-terms truncation, provided a sufficient amount of data have been suitably sampled, no linear decomposition captures as much variance as the discrete formulation of the POD decomposition, so that the reduced dimension $C$ of \cite{appx_convenet_2022} satisfies the inequality $C > N$ with probability $1 - \delta$ (see Subsection \ref{subsec:POD-analysis} and Appendix \ref{sec:appendix}). Furthermore, we assume that: 
 \begin{itemize}
     \item [(i)] $N \gg n$ as usual, since we expect that the nonlinear Kolmogorov $n$-width decays (much) faster than the linear reduced dimension $N$;  
     
     \item[(ii)] $u(\cdot,\muv,t) \in C^\alpha(\Omega)$ for any $ (\muv,t) \in \mathcal{P} \times \mathcal{T}$ for some $\alpha \ge 1$ to comply with the hypotheses of \textit{Theorem 2} of \cite{appx_convenet_2022};  
     
     \item[(iii)] the parameter-to-solution map has regularity $r$, i.e. $\mathcal{G} \in W^{r,\infty}(\mathcal{P} \times \mathcal{T}; \mathbb{R}^{N_h})$;
     
     \item[(iv)] the decoder map is adequately regular, namely $\frac{n}{s-1} > \frac{p+1}{r}$ ($s \ge 3r + 1$ is sufficient, as in \ref{subsec:POD_DNN}).
 \end{itemize}
We recall that \textit{Theorem 2} in \cite{appx_convenet_2022} provides the estimate $C = O(\varepsilon^{-\frac{2}{2\alpha-1}})$. Therefore, in the worst case scenario $N = O(\varepsilon^{-\frac{2}{2\alpha-1}})$; however, depending on the singular values decay that in some cases might be even exponential (e.g. stationary elliptic PDEs, analytic parameter-to-solution maps, see \cite{Manzoni2016}) we actually obtain improved estimates. We then derive:
\begin{equation*}
    \begin{aligned}
        w_{POD-DL-ROM} &= O(N \varepsilon^{-n/(s-1)} \log(\varepsilon^{-1})) + O(n \varepsilon^{-(p+1)} \log(\varepsilon^{-1})) \\ & \approx O(N \varepsilon^{-n/(s-1)} \log(\varepsilon^{-1})) \\ &
        \lesssim O(C \varepsilon^{-n/(s-1)} \log(\varepsilon^{-1}))
        \\ &= O(\varepsilon^{-\frac{2}{2\alpha-1} - \frac{n}{(s-1)}} \log(\varepsilon^{-1}))
        \\ &\lesssim O(\varepsilon^{-\frac{2}{2\alpha-1}} [\varepsilon^{- \frac{n}{(s-1)}} (\log(\varepsilon^{-1}) + \log(h^{-1})])
        \\ &\lesssim O(\varepsilon^{-\frac{2}{2\alpha-1}} [\varepsilon^{- \frac{p+1}{r}} (\log(\varepsilon^{-1}) + \log(h^{-1})])
        \\ &= w_{CNN}.
    \end{aligned}
\end{equation*}
Thus, we can conclude that, if the hypotheses setting is verified, the overall complexity of the POD-DL-ROMs in terms of active weights is lower (or equal) than the complexity of the CNN architecture proposed in \cite{appx_convenet_2022}.

\section{Numerical experiments}
\label{sec:six}
Within this section, we present different numerical tests, aiming at validating the theoretical analysis proposed in the previous Sections. In particular, we focus on {\em (i)} the error bounds of Theorems \ref{thm:lower_bound}--\ref{thm:upper_bound} and the error decomposition formula, as well as  on {\em (ii)}  the role of the reduced dimension $N$ and the total number of snapshots $N_{data}$ and on  {\em (iii)} the comparison against recent approaches proposed in the literature, in light of the theoretical results of Sections \ref{sec:four} and \ref{sec:five}. In particular, the numerical experiments involve:
\begin{itemize}
    \item[a)] a benchmark test case with an analytically defined operator that allows us to know \textit{a priori} the properties of the parametric operator (like, e.g., the regularity of the parameter-to-solution map) in order to validate the theoretical estimates on the network complexity;
    \item [b)] a linear 1D Initial Boundary Value Problem (IBVP), to show how to select $N_{data}$ and $N$ in order to minimize the \textit{a priori} error (given by the sum of $\mathcal{E}_S$ and $\mathcal{E}_{POD}$), then validating \textit{a posteriori} the network complexity as a function of the relative error; 
    \item [c)] a nonlinear 2D time-dependent IBVP in a non-conventional domain, to show the effectiveness of the POD-DL-ROM approach when dealing with more complex problems, validating also the \textit{lower bound} and the \textit{upper bound} on the relative error $\mathcal{E}_R$, which stem from the theoretical analysis.
\end{itemize}

We remark that the complexity analysis of POD-DL-ROM and related approaches is discussed from a theoretical point of view only in terms of the approximation error; however, when numerical experiments are addressed, we also have to take into account the training error, which plays a major role especially when the network is sufficiently deep or wide, or data are limited. For the same reasons, in our numerical experiments we mainly address the complexity study in terms of number of active weights $w$, since the latter is a quantity which is less sensitive (when compared to the depth $L$) to the training error. Thus, the experimental complexity analysis presented here may not reflect exactly the estimates provided in the previous sections, but they validate qualitatively the theory. However, within the present section, aiming at mitigating the effect of the training error on the error estimates, we employ several \textit{ad hoc} strategies, like, e.g., 
\begin{itemize}
    \item we employ early stopping to prevent overfitting;
    \item the approximation results in terms of network complexity are achieved in an error range $[\varepsilon_1, \varepsilon_2]$ that is deemed appropriate for the chosen number of samples $N_{data}$: in practice the training error depends on data availability;
    \item for fixed number of active weights, we regulate the network architecture trying to randomly achieve the configuration that minimizes the training error; we keep the depth of the network as low as possible in order to ensure convergence to a suitable minimum and avoid expensive training loops;
    \item starting from educated guesses, we look for the best training hyperparamenters (which are the learning rate and the learning rate decay).
\end{itemize}

Finally, we remark that, in order to comply with the hypotheses of the Theorems of  Section~\ref{sec:four}, we limit the numerical experiments to generic dense layers and employ LeakyReLU as activation function:
\begin{equation*}
    \textnormal{LeakyReLU}_\alpha(x) = \left\{ \begin{aligned} x, \qquad & x \ge 0 \\
    \alpha x, \qquad & x < 0\end{aligned} \right..
\end{equation*}
Unless otherwise stated, we set $\alpha = 0.1$. The optimization procedure is carried out by employing the Adam algorithm \cite{KingmaB14}.

\subsection{Benchmark test case}

We begin our experimental analysis by considering a benchmark test case similar to the one described in \cite{appx_convenet_2022}, and involving the reconstruction of an analytically defined operator, namely
\begin{equation*}
    u_\beta(x,\muv) = \mu_3 |x-\mu_1|^\beta e^{-\mu_2 x}, \qquad x \in [0,1],
\end{equation*}
where $\muv = [\mu_1, \mu_2, \mu_3] \in \mathcal{P} = [0,1] \times [0,1] \times [1,2]$. Within this numerical test we vary $\beta \in \{3/2,7/3,3\}$ and we analyze the three resulting cases independently. Notice that the hyperparameter $\beta > 0$ controls the regularity of the parameter-to-solution map. Indeed,
\begin{equation*}
    \begin{aligned}
    u_{3/2}(x,\cdot) &\in W^{1,+\infty}(\mathcal{P})  \setminus W^{2,+\infty}(\mathcal{P}) \\
    u_{7/3}(x,\cdot) &\in W^{2,+\infty}(\mathcal{P})  \setminus W^{3,+\infty}(\mathcal{P}) \\
    u_{3}(x,\cdot) &\in W^{3,+\infty}(\mathcal{P})  \setminus W^{4,+\infty}(\mathcal{P});
    \end{aligned}
\end{equation*}
thus $\beta=3/2,7/3,3$ correspond to $r=1,2,3$ respectively, where $r$ is defined as the regularity of the parameter-to-solution map in agreement with this paper notation.
Furthermore, the problem does not depend on the time variable, thus we set $N_t = 1$,  $N_{data} = N_s$ and $p=2$ (instead of $p=3$) to comply with the theoretical framework of the present work. 
Moreover, we discretize the problem in space by means of a uniform discretization with $N_h = 1000$.  Selecting $n = 5 \le 2 p + 3 = 7$ to ensure both a suitable compression and an adequate representation in the latent space, $N_{s} = 500$, and
\begin{equation*}
    N = N(r) = \left\{ \begin{aligned} 20, \qquad& r = 1 \\ 17, \qquad& r=2 \\ 15, \qquad& r = 3,\end{aligned} \right.
\end{equation*}
to control the variability retained by the preliminary linear dimensionality reduction.
We then proceed towards a complexity analysis, showing a comparison of the results against the CNN approach considered in \cite{appx_convenet_2022}, the POD+DNN framework and the lin+ResNets technique. We remark that for the sake of fairness and consistency, we keep the batch size during training equal to $B=20$ for every comparison considered in the benchmark test case. Then, for any $r \in \{1,2,3\}$, we estimate the approximation error $\mathcal{E}_R$ on the respective test set consisting of $N_s^{test} = 10^4$ samples.

From a theoretical standpoint, we immediately notice  $\mathcal{G}: \muv \mapsto \mathbf{u}(\muv) \in W^{r,+\infty}(\mathcal{P}; \mathbb{R}^{N_h})$; then, from the findings of Section~\ref{sec:five}, since $n \ll N$, we can infer that
\begin{equation*}
    \begin{aligned}
    w_{POD+DNN} &= O(N \varepsilon^{-3/r}\log(\varepsilon^{-1})) \\
    w_{POD-DL-ROM} &= O(N \varepsilon^{-5 / (s-1)}\log(\varepsilon^{-1})).
    \end{aligned}
\end{equation*}
Thus, owing to the fact that in the POD-DL-ROMs approach the perfect embedding Assumption with coefficients $s,s'$ is enforced thanks to their peculiar loss formulation, we expect them yielding a less steep increase (when compared to POD+DNNs) in the model complexity as the accuracy level decreases whenever the decoder map is suitably regular, which is equivalent to require $s > \frac{5}{3}r + 1$. Figure \ref{fig:Fig1} demonstrates that the latter behavior is more likely to happen as the regularity of the parameter-to-solution map $r$ decreases.
\begin{figure}[t!]
    \centering
    \vspace{-0.15cm}
\includegraphics[width=0.99\textwidth]{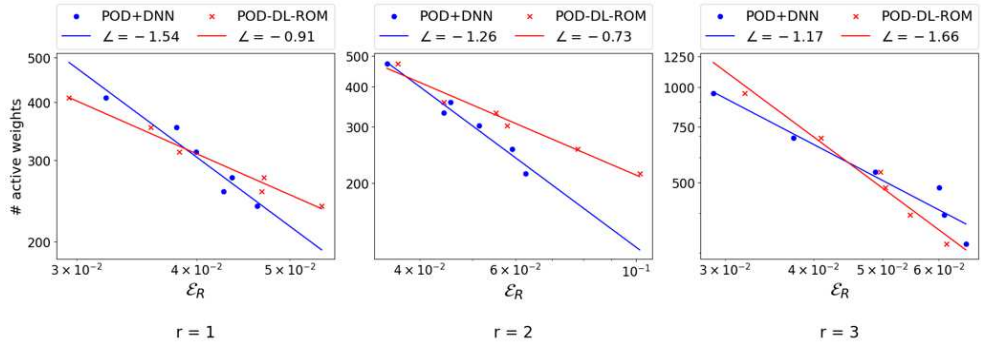}
    \caption{Benchmark test case: model complexity comparison between POD-DL-ROMs and POD+DNNs as the parameter-to-solution regularity $r$ varies in $\{1,2,3\}$. The trends are displayed through solid lines, which fit the collected results in the least squares sense.}%
    \label{fig:Fig1}%
     \vspace{-0.15cm}
\end{figure}

We then compare POD-DL-ROMs against the lin+ResNets approach; for the latter, we limit the analysis to the case where the basis functions are yielded by POD for the sake of consistency. We thus fix the latent space dimension of the residual layers as $k = 5$ and, from the estimates obtained in Section~\ref{sec:five}, we recall that the complexity bound of lin+ResNets in terms of number of active weights is in general independent of the regularity of the parameter-to-solution map, namely:
\begin{equation*}
    w_{lin+ResNets} = O(Nk\varepsilon^{-1}).
\end{equation*}
We thus remark that the lin+ResNets approach does not take advantage of any regularity assumption on the parameter-to-solution map: we then expect a similar trend as $r$ varies in $\{1,2,3\}$. Nonetheless, if the trained POD-DL-ROM architecture are able to find an adequate representation in the latent space which induces a very regular decoder, that is $s > 6$, we can ensure that the POD-DL-ROM outperform the lin+ResNets approach in terms of complexity: this behavior is indeed observed in Figure \ref{fig:Fig2}.

\begin{figure}[ht]
    \centering
    \vspace{-0.15cm}
\includegraphics[width=0.99\textwidth]{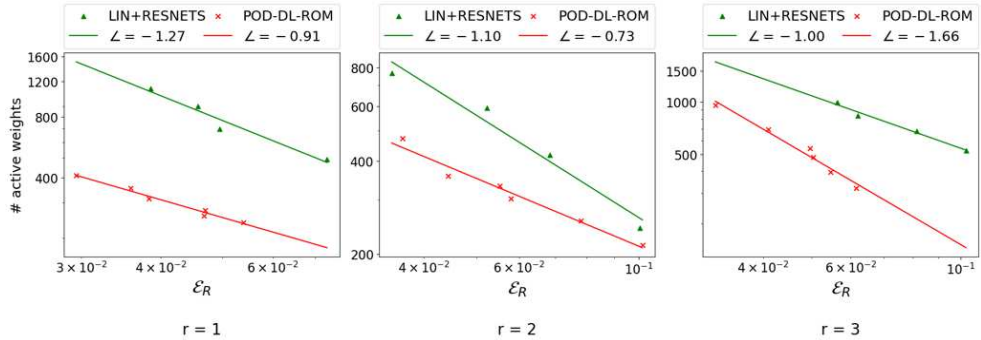}
    \caption{Benchmark test case: model complexity trend of POD-DL-ROMs and the lin+ResNets approach for different values regularity of the parameter solution map $r$. }%
    \label{fig:Fig2}%
    \vspace{-0.15cm}
\end{figure}

Finally, we consider the comparison against the CNN approach considered in \cite{appx_convenet_2022}: if the decoder map is sufficiently regular (from the theoretical analysis we derive the condition $s \ge \frac{5}{3}r + 1$), POD-DL-ROMs take advantage of the basis optimality to achieve a less steep increase of complexity as the error bound $\mathcal{E}_R < \epsilon$ decreases: the behavior is indeed observed in Figure \ref{fig:Fig3}, in the cases when the regularity of the parameter-to-solution map is low ($r=1,2$). Moreover, differently from the CNN-based technique, we remark that the POD-DL-ROMs' algorithm does not require to learn the basis functions, thus not affecting the overall complexity of the underlying network.

\begin{figure}[ht]
    \centering
    \vspace{-0.15cm}
\includegraphics[width=0.99\textwidth]{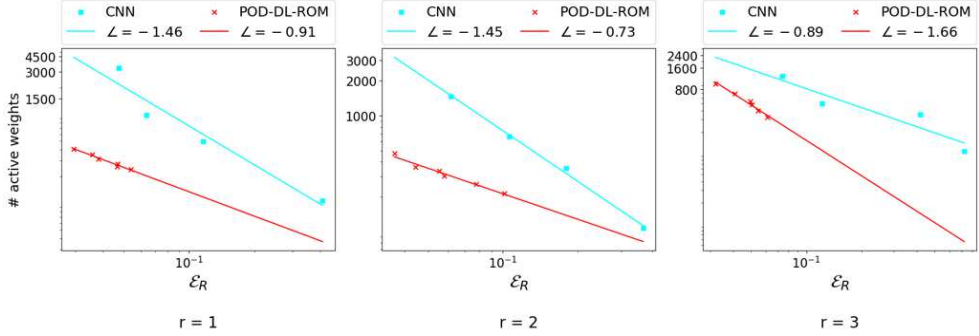}
    \caption{Benchmark test case: comparison between POD-DL-ROMs and CNNs in terms of number of active weights, varying the regularity $r \in \{1,2,3\}$. }%
    \label{fig:Fig3}%
    \vspace{-0.15cm}
\end{figure}

\subsection{1D Initial Boundary Value Problem}

The present test case is designed to highlight the advantages of POD-DL-ROMs when compared to other considered approaches even when dealing with time-dependent parametrized problems. Moreover, before starting the training process, we show \textit{a priori} how to choose the hyperparameters $N,N_s, N_t$, based on the analysis of $\mathcal{E}_S$ and $\mathcal{E}_{POD}$. In particular, we consider the following IBVP: 
\begin{equation*}
    \left\{
    \begin{aligned}
        &\frac{\partial{u}}{\partial{t}} - \frac{\partial^2{u}}{\partial{x}^2} = u + 10\cos(x)\sin(2\pi t) , &\mbox{in} \ (0,\pi) \times (0,T] \\
        &u = 10(2 \mu^3 - 3 \mu^2 + \mu) , & \mbox{at} \ \{x=0\} \times (0,T]\\
        &\frac{\partial{u}}{\partial{x}} = 2 |1 - 2 \mu| - 1 , & \mbox{at} \  \{x =\pi\} \times (0,T] \\
        &u(x,0)  = u_0(\mu),&  \mbox{in} \ (0,\pi),
    \end{aligned}
    \right. 
\end{equation*}
where the initial condition is
\begin{equation*}
    u_0 = u_0(\mu) = 10(2 \mu^3 - 3 \mu^2 + \mu) \cos(x) + (2 |1 - 2 \mu| - 1) \sin(x),
\end{equation*}
while $\mu \in \mathcal{P} = [0,1]$ and $T = 1$. Thus, $p = 1$ and we can fix $n = 5 = 2p + 3$ to ensure an adequate representation in the latent space, according to the framework presented in the present paper. We collected synthetic data generated with an high-fidelity model solved on a uniform grid of $N_h = 100$ points: we generate a test set of $N_s^{test} = 100$ samples of $N_t^{test} = 200$ snapshots each with a \textsc{Matlab}-based PDE solver, sampling $\muv \sim \mathcal{U}(\mathcal{P})$ iid and $t$ from a uniform grid of step $\Delta t^{test} = T / N_t^{test}$.

\begin{figure}[b!]
    \centering
    \vspace{-0.15cm}
\includegraphics[width=0.99\textwidth]{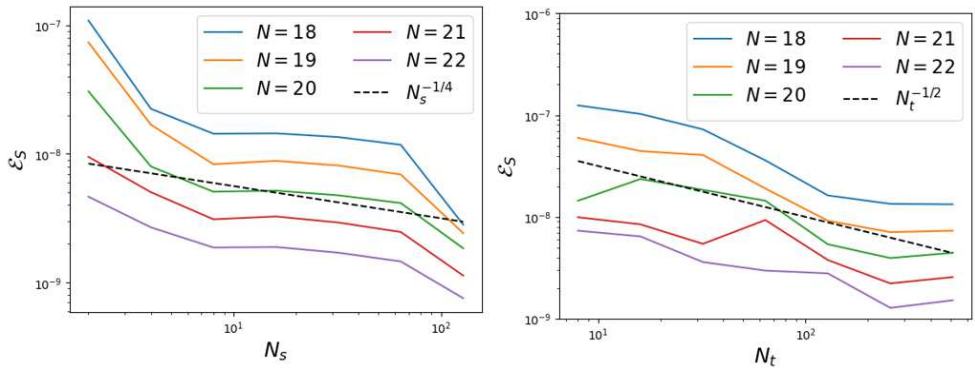}
    \caption{1D IBVP test case: decay of the sampling error $\mathcal{E}_S$ with respect to $N_s$, $N_t$ and $N$.}%
    \label{fig:Fig4}%
    \vspace{-0.15cm}
\end{figure}

\begin{figure}[ht]
    \centering
    \vspace{-0.15cm}
    \includegraphics[width=11.5cm]{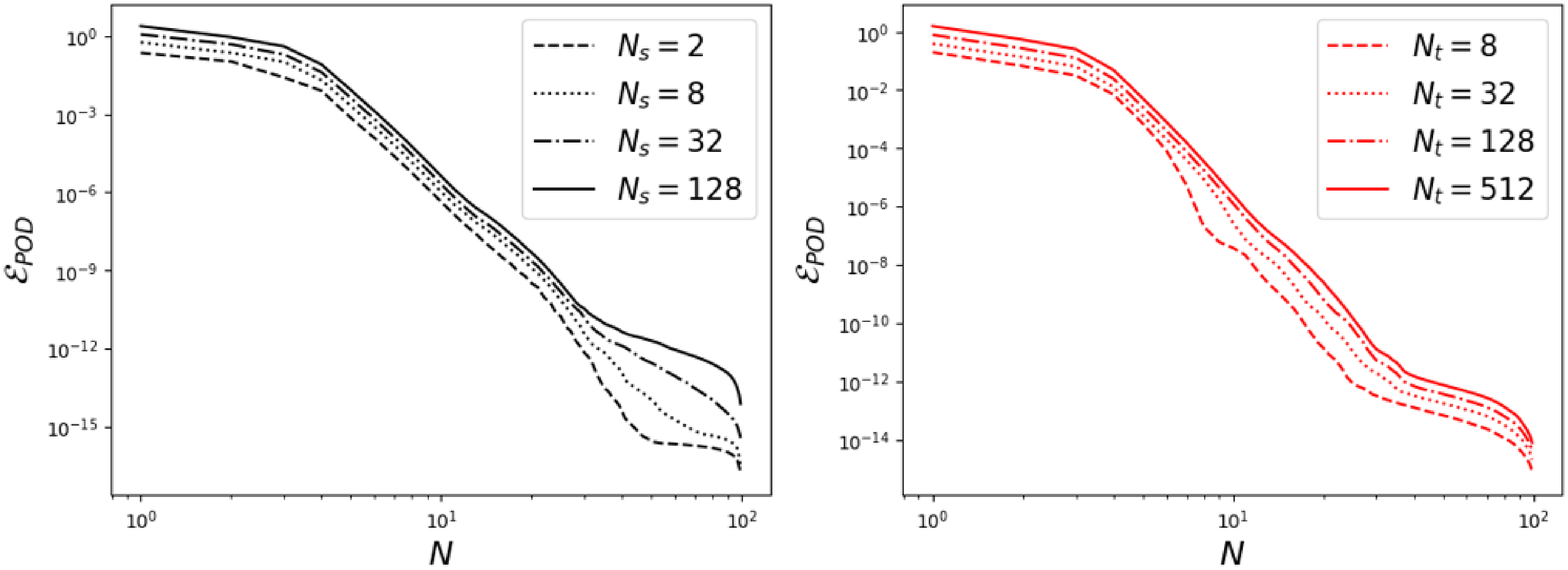}
    \caption{1D IBVP test case: decay of the projection error $\mathcal{E}_{POD}$ varying $N_s$, $N_t$ and $N$.}%
    \label{fig:Fig5}%
    \vspace{-0.15cm}
\end{figure}

We start by analyzing the dependence of $\mathcal{E}_{S}$ on $N_s$, $N_t$ and $N$; for the sake of clarity, we specify that the sampling criterion employed in the \textit{a priori} analysis below is based on the theoretical analysis of the entire work: thus, we assume $\muv \sim \mathcal{U}(\mathcal{P})$ iid and that $t$ is sampled from a uniform grid of step $\Delta t = 1 / N_t$. To analyze the effect of $N_s$ on the sampling error, we fix $N_t = 1000$ and we generate a group of datasets depending on $N_s \in \{l = 2^k: k=1,\ldots,7\}$: as shown in Figure \ref{fig:Fig4}, the decay has slope $-1/4$ and it is independent of the chosen value of $N$. Conversely, we fix $N_s = 100$ and vary $N_t \in \{l = 2^k: k=3,\ldots,9\}$, validating experimentally in Figure \ref{fig:Fig4} that $\mathcal{E}_S \sim N_t^{-1/2}$, independently of $N$. 
We then move to the analysis of the projection error, showing in Figure \ref{fig:Fig5} how $\mathcal{E}_{POD}$ decays with $N$ and is mostly independent of $N_s$ and $N_t$ respectively.
We notice that the present analysis is done before the training of the underlying neural network and allow us to know \textit{a priori} how much variance is not accounted for due to the sampling ($\mathcal{E}_S$) and the initial dimensionality reduction ($\mathcal{E}_{POD}$), allowing us to calibrate the values $N, N_s, N_t$ before we start the expensive training procedure. The idea is to choose $N,N_s, N_t$ to guarantee that $\mathcal{E}_{POD}$ and $\mathcal{E}_S$ are suitably small, so that we can control the relative error $\mathcal{E}_R$ with a strict bound, which is provided by the error decomposition of Theorem \ref{thm:error_decomposition}. Thus, based on the results of the present \textit{a priori} analysis, we choose $N_s = 50$, $N_t = 20$, $N = 20$.

\begin{figure}[ht]
    \centering
    \vspace{-0.15cm}
\includegraphics[width=0.99\textwidth]{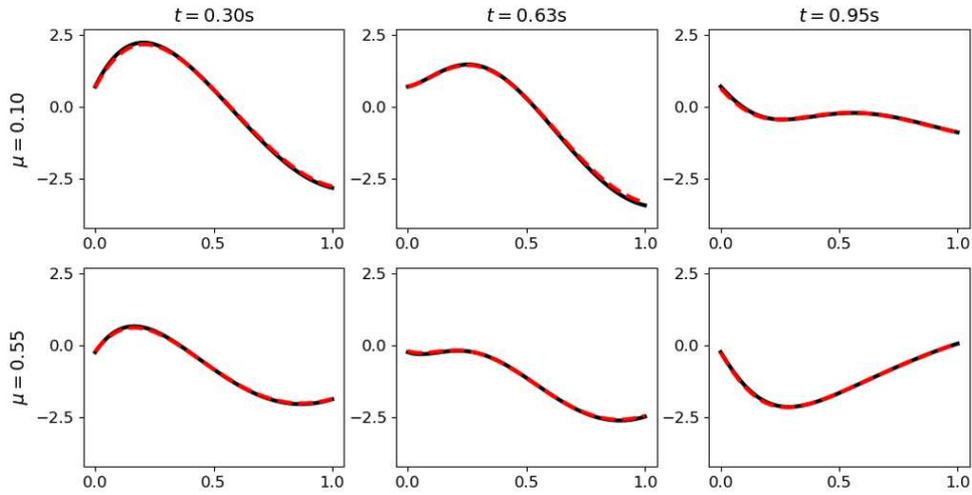}
    \caption{1D IBVP test case: comparison between the "true" solution (solid black line) and the most accurate POD-DL-ROM prediction (dashed red line) to demonstrate that the variability of the solution manifold is correctly reproduced.}%
    \label{fig:Fig6}%
    \vspace{-0.15cm}
\end{figure}

We then move our focus to the comparison of the POD-DL-ROM technique against other approaches in terms of complexity, showing the relation between the relative error $\mathcal{E}_R$ and the number of active weights employed in the underlying neural network. Notice that, since the analytical solution of the IBVP is not available, here we are not provided with any information on the regularity of the parameter-to-solution map. Anyway, experimental results on the complexity analysis confirm our theoretical expectations: when dealing with parameter-to-solution maps arising from parametric PDEs, POD-DL-ROMs' complexity increases slower than POD+DNNs' one as the relative error decreases. Indeed, the latent representation of the POD-DL-ROM approach induces a decoder that is extremely regular, that is $s \gg 2$, which enables a slow increase in network complexity, as suggested by the theoretical approximation bounds of Theorem \ref{thm:upper_bound} and validated in Figure \ref{fig:Fig7}. Similarly, we notice that the results relative to the comparison between POD-DL-ROMs and lin+ResNets are in agreement with the theory, demonstrating again how, lin+ResNets are outperformed in terms of complexity by POD-DL-ROMs, when it is possible for the latter to achieve an extremely regular decoder map due to an adequate latent representation.
Finally, when compared to the Fourier-inspired CNN technique POD-DL-ROMs' number of active weights show a slower increase as the relative error $\mathcal{E}_R$ decreases, as shown in Figure \ref{fig:Fig7}; as proved theoretically in Section~\ref{sec:five}, the magnitude of the slope is strongly linked to the optimality of the basis functions. Moreover we validate how the burden of learning the set of basis function impacts heavily on the underlying CNN complexity, which shows a remarkable difference when compared the POD-DL-ROM approach in terms of number of active weights, not only regarding the slope magnitude but also in the absolute sense. The observed behavior highlights how crucial it is in terms of complexity to consider a \textit{fixed} set of optimal basis functions instead of a \textit{learnable} set of non-optimal ones. 

Thus, this validates the theoretical considerations and concludes our comparison based on model complexity, demonstrating how POD-DL-ROMs outperform any of the considered techniques when tackling more complex problems, for which the regularity of the parameter-to-solution map is low or unknown \textit{a priori}.

\begin{figure}[b!h]
    \centering
    \vspace{-0.15cm}
\includegraphics[width=0.99\textwidth]{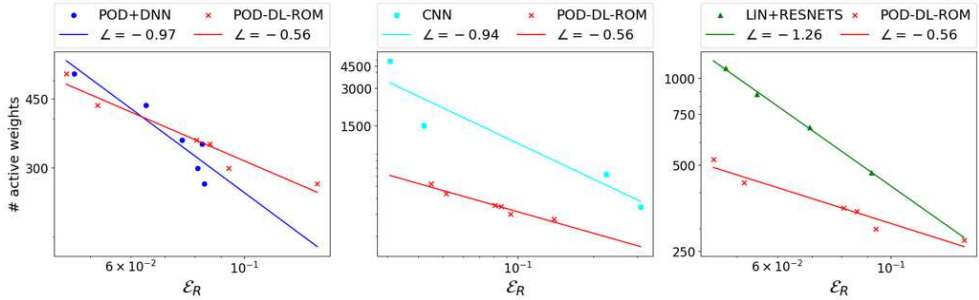}
    \caption{1D IBVP test case: comparison between POD-DL-ROMs and other techniques in terms of number of active weights. The solid line represents the least squares fitting of the \textit{log-log} data.}%
    \label{fig:Fig7}%
    \vspace{-0.15cm}
\end{figure}

\subsection{2D nonlinear Initial Boundary Value Problem}
The last test case involves a nonlinear version of a time-dependent nonlinear parametrized diffusion equation with a non-affine source term in an unconventional domain; the strong formulation of the problem at hand takes the form
\begin{equation*}
    \left\{
    \begin{aligned}
        &\frac{\partial{u}}{\partial{t}} -  \nabla \cdot \biggl(0.001 (1+u^2) \nabla u\biggr) = 0,& \mbox{in} \ \Omega \times (0,T] \\
        &u = 1 - e^{-100 t} + h(x,y,\mu)  e^{-100 t}, & \mbox{on} \  \Gamma_D \times (0,T] \\
        &\frac{\partial{u}}{\partial{n}} = 0, & \mbox{on} \  \Gamma_N \times (0,T] \\
        &u_0 = h(x,y,\mu), & \mbox{in} \  \Omega,
    \end{aligned}
    \right.
\end{equation*}
where $T=0.05$ and 
\begin{itemize}
    \item $h(x,y,\mu) = 0.1 + 10 y \sin(\mu \pi x)$ represents a non-affine term, being $\mu \in  \mathcal{P} = [5,7]$ the parameter that regulates the spatial frequency of $h = h(x,y, \mu)$;
    \item letting $E_{a,b}(x,y)$ be the ellipse of axes $a$ and $b$ and center $(x,y)$, we set $D_1 = E_{0.2,0.2}(0.5,0.4)$ and $D_2 = E_{0.3,0.1}(1.0,0.2)$; then, we can define the domain as $\Omega = (0,1) \times (0,0.4) \setminus (D_1 \cup D_2)$;
    \item the Dirichlet and the Neumann boundary are  $\Gamma_D = \partial D_1 \cup \partial D_2$ and $\Gamma_N = \partial \Omega  \setminus (\partial D_1 \cup \partial D_2)$, respectively.
\end{itemize}
Through this numerical experiment we aim at verifying the \textit{upper bound} and \textit{lower bound} results presented in Section~\ref{sec:four}. 
To do so, we generate the training set and the test set input-output pairs through the numerical solution of the discretized problem on a mesh of $N_h = 1666$ dofs by means of P1-FEM, employing a Forward Euler time-advancing scheme and the Newton method to handle nonlinearities. The training set is made by $N_s=20$ samples relative to $\muv \sim \mathcal{U}(\mathcal{P})$ iid of $N_t = 30$ snapshots each, sampling $t$ from a uniformly space time grid of step $T/N_t$. The test set data consist of $N_s^{test} = 30$ samples, evaluated on the same time grid employed in the training set.

\begin{figure}[ht]
    \centering
    \vspace{-0.15cm}
\includegraphics[width=0.99\textwidth]{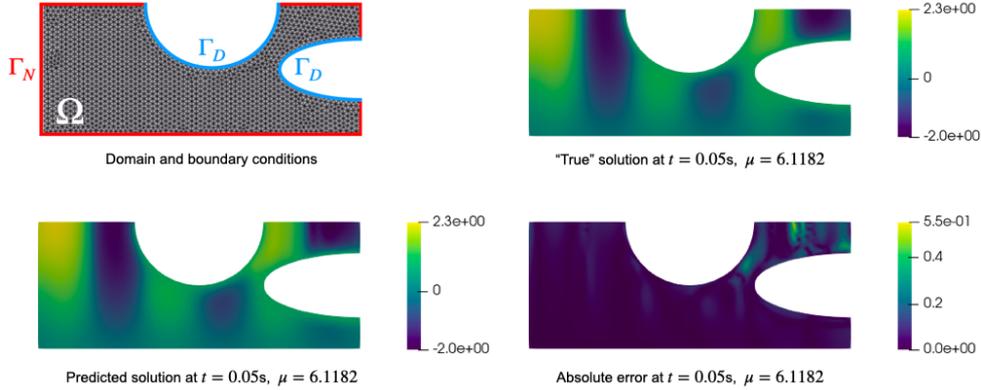}
    \caption{2D IBVP test case: domain and boundary specifics (upper left), comparison between "true" solution (upper right) and POD-DL-ROM's predicted solution (lower left) and visualization of the absolute error (lower right), in the case of $N=32$.}%
    \label{fig:Fig8}%
    \vspace{-0.15cm}
\end{figure}

Then, for each $N \in \{2^k, k=0,\ldots,5\}$ we train a POD-DL-ROM of latent dimension $n=2p+1=5$, which is composed of:
\begin{itemize}
    \item a reduced network of $3$ hidden dense layers of $10$ units each;
    \item an encoder and a decoder with $5$ hidden dense layers of $25$ units each.
\end{itemize} 
We then evaluate the lower bound $\frac{m}{M}\tilde{\mathcal{E}}_{POD}$, the upper bound due to the error decomposition formula $\mathcal{E}_{NN} + \mathcal{E}_S + \mathcal{E}_{POD}$, the value relative error $\mathcal{E}_R$, according to the theoretical framework of Section~\ref{sec:four}.

\begin{figure}[ht]
    \centering
    \includegraphics[width=0.95\textwidth]{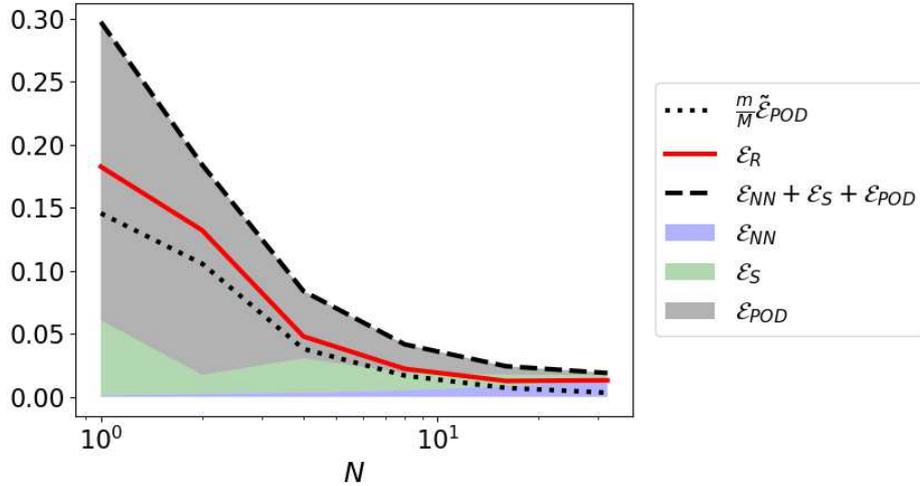}
    \caption{2D IBVP test case: error bounds analysis varying the POD dimension $N$}%
    \label{fig:Fig9}%
\end{figure}

We show both the lower bound and the upper bound results in Figure \ref{fig:Fig9}, displaying as well the error contributions $\mathcal{E}_{NN}, \mathcal{E}_S, \mathcal{E}_{POD}$ to assess the way they affect the relative error $\mathcal{E}_R$. We then remark again that it is crucial for POD-DL-ROMs to provide both an adequate neural network approximation of the parameter-to-solution map and a suitably large POD dimension. Indeed, we notice that in the present test case, especially for low values of $N$, $\mathcal{E}_{NN}$ shows a marginal contribution to the upper bound value when compared to the sampling error $\mathcal{E}_S$ and the projection error $\mathcal{E}_{POD}$. Furthermore, as expected, we observe the strong dependence of the lower bound $\frac{m}{M}\tilde{\mathcal{E}}_{POD}$ on the POD dimension, demonstrating again the importance of choosing an adequate value for $N$. Finally, we assess \textit{a posteriori} that the number of samples in the training set is suitable since the sampling error $\mathcal{E}_S$ does not heavily influence the upper bound of the relative error.

\section*{Conclusions}
\label{sec:seven}

The main goal of this work is to suggest effective and practical strategies to set a POD-DL-ROM stemming from a rigorous analysis of the technique, to control the approximation accuracy, measured in terms of the relative error $\mathcal{E}_R$, which is linked to relevant features and hyperparameters that can be effectively regulated. To accomplish the task, we analyze the error $\mathcal{E}_R$, providing a \textit{lower bound} that depends only on the projection-based nature of the method. Then, by the \textit{error decomposition} formula and the \textit{upper bound} result, we highlight the contribution of sampling, POD projection and neural network approximation; in particular:
\begin{itemize}
    \item [(i)] on the basis of the analysis of the sampling error $\mathcal{E}_S$ we propose a family of strategies to adopt in the data collection phase in order to ensure the convergence of $\mathcal{E}_S \rightarrow 0$ in the limit of infinite data, providing also a decay estimate through Monte Carlo analysis in terms of the number of sampled snapshots $N_{data}$;
    
    \item [(ii)] we determine a practical criterion based on the eigenvalue decay to control $\mathcal{E}_{POD}$ in terms of the reduced dimension $N$;
    
    \item [(iii)] starting from  the approximation results proposed in \cite{yarotsky2017}, we estimate the complexity of the underlying neural network that is required to reach a given accuracy.
\end{itemize}

Then, relying on the aforementioned findings, we compare the POD-DL-ROM paradigm to other architectures that are widely used in the literature, namely DL-ROMs \cite{dlrom_bounds,dlrom_2021,dlrom_electrophys}, POD+DNNs \cite{chen_2021,hesthaven_2018,salvador_2021}, POD-DeepONets \cite{deeponet_vs_fno_2022}, lin+ResNets \cite{oleary-2022} as well as CNNs \cite{appx_convenet_2022}, showing the strengths of the POD-DL-ROM strategy, especially when dealing with low-regularity maps. Ultimately, we demonstrate the outstanding approximation properties of POD-DL-ROMs, which motivate the excellent performance already encountered in a variety of test cases analyzed in the recent literature \cite{pod_dl_rom_2022,POD-DL-ROM-fluids} and in the present work.
Several working directions could stem from the present paper; for instance, more efficient sampling criteria arising from Monte Carlo analysis could be implemented: we mention variance reduction techniques and Quasi Monte Carlo methods \cite{caflisch_1998}, among others. On the other hand, one could consider \textit{ad hoc} layers to be employed in the reconstruction of parameter-to-POD-coefficients maps instead of relying purely on dense layers; however, this latter option would require novel and precise approximation results for the considered layers. Moreover, an alternative formulation could split the time- and the parameter-dependence, avoiding to treat time as an additional parameter, similarly to what has been proposed in \cite{mionet_2022}, in order to further enhance the   approximation bounds proposed in this paper.






\section*{Acknowledgments}

We acknowledge the support of Fondazione Cariplo, Italy, Grant n. 2019-4608, of the PNRR-PE-AI FAIR project funded by the NextGeneration EU program, as well as of
the National Group of Scientific Computing (GNCS) of INDAM - Istituto Nazionale di Alta Matematica. SF also acknowledges the Isaac Newton Institute for Mathematical Sciences, Cambridge, UK, for support and hospitality during the programme “The mathematical and statistical foundation of future data-driven engineering”, EPSRC grant no EP/R014604,  where part of this work was undertaken.

\begin{appendices}

\section{Additional proofs}
\label{sec:appendix}

\subsection{Proof of Proposition \ref{prop:norm}}
We notice immediately that the integral is well defined $\forall \mathbf{v} \in L^2(\mathcal{P} \times \mathcal{T}; \mathbb{R}^{N_h})$ thanks to the boundedness assumptions on the solution $\mathbf{u} \in L^2(\mathcal{P} \times \mathcal{T}; \mathbb{R}^{N_h})$. We also remark that the boundedness hypotheses may be relaxed: our choice was aimed at consistency with the other theoretical results of the present work.
In order to prove that $\|\cdot\|_{L^2_w}$ is a norm, we have to show that:
\begin{itemize}
    \item [(i)] It satisfies the triangle inequality. Given $\mathbf{v}, \mathbf{z} \in L^2(\mathcal{P}\times \mathcal{T}; \mathbb{R}^{N_h})$, by means of the triangular inequality, it is trivial to show that
    \begin{equation*}
        \begin{aligned}
        &\|\mathbf{v} + \mathbf{z}\|^2_{L^2_w} =
        \\ & = 
        \int_{\mathcal{P} \times \mathcal{T}}\|\mathbf{v}(\muv,t) + \mathbf{z}(\muv,t)\|^2 w(\muv,t) d(\muv,t) \le \\ &\le
        \int_{\mathcal{P} \times \mathcal{T}} (\|\mathbf{v}(\muv,t) \| + \|\mathbf{z}(\muv,t)\|)^2 w(\muv,t) d(\muv,t) = \\ & =
        \int_{\mathcal{P} \times \mathcal{T}} (\|\mathbf{v}(\muv,t) \|^2 + \|\mathbf{z}(\muv,t)\|^2 +  2\|\mathbf{v}(\muv,t) \| \|\mathbf{z}(\muv,t) \| )w(\muv,t) d(\muv,t).
        \end{aligned}
    \end{equation*}
    \noindent Moreover, by the Cauchy-Schwarz inequality, the following inequality holds,
    \begin{equation*}
        \begin{aligned}
        &\int_{\mathcal{P} \times \mathcal{T}} \|\mathbf{v}(\muv,t) \| \|\mathbf{z}(\muv,t) \| w(\muv,t) d(\muv,t) \le \\ &\le \sqrt{\int_{\mathcal{P} \times \mathcal{T}} \|\mathbf{v}(\muv,t) \|^2 w(\muv,t) d(\muv,t) \int_{\mathcal{P} \times \mathcal{T}} \|\mathbf{z}(\muv,t) \|^2 w(\muv,t) d(\muv,t)}.
        \end{aligned}
    \end{equation*}
    Thus, we can infer 
    \begin{equation*}
        \|\mathbf{v} + \mathbf{z}\|^2_{L^2_w} \le \|\mathbf{v}\|^2_{L^2_w} + \|\mathbf{z}\|^2_{L^2_w} + 2\|\mathbf{v}\|_{L^2_w} \|\mathbf{z}\|_{L^2_w}=(\|\mathbf{v}\|_{L^2_w} + \|\mathbf{z}\|_{L^2_w})^2
    \end{equation*}
    and derive the thesis;
    \item [(ii)] $\|\cdot\|_{L^2_w}$ is homogeneous thanks to the linearity of the integral;
    \item[(iii)] If $\mathbf{v} \in L^2(\mathcal{P} \times \mathcal{T}; \mathbb{R}^{N_h})$, $\|\mathbf{v}\|_{L^2_w} = 0$ implies that $\mathbf{v} = \bm{0}$ a.e. by trivial arguments. 
\end{itemize}

\subsection{Proof of Proposition \ref{prop:monte-carlo}}
Thanks to Assumption \ref{assumption:sampling}, trivially we obtain $\Delta t = T N_t^{-1} = O(N_t^{-1})$ and we set $t_i = i\Delta t$. Letting $f=f(\muv,t)$ be the (sufficiently regular) integrand of the integral that we want to approximate, we obtain
\begin{equation*}
    \mathbb{E}\biggl| \int_{\mathcal{P} \times \mathcal{T}} f(\muv,t) d(\muv,t) - \frac{\Delta t |\mathcal{P}|}{N_s} \sum_{i=1}^{N_t}\sum_{j=1}^{N_s} f(\muv_j, t_i) \biggr| \le I_1 + I_2,
\end{equation*}
where 
\begin{equation*}
    I_1 = \biggl| \int_{\mathcal{P} \times \mathcal{T}} f(\muv,t) d(\muv,t) - \Delta t \sum_{i=1}^{N_t} \int_{\mathcal{P}} f(\muv,t_i) d\muv \biggr| = O(N_t^{-1}) 
\end{equation*}
and
\begin{equation*}
    \begin{aligned}
    I_2 &= \Delta t \sum_{i=1}^{N_t} \mathbb{E} \biggr|\int_{\mathcal{P}} f(\muv,t_i) d\muv - \frac{|\mathcal{P}|}{N_s}\sum_{j=1}^{N_s} f(\muv_j,t_i) \biggr| \\ &= O\biggl(N_s^{-1/2} \Delta t \sum_{i=1}^{N_t} (\Var(f(\mu,t_i)))^{1/2}\biggr) \\ &= O\biggl(N_s^{-1/2} \biggl(O(N_t^{-1}) + \int_{\mathcal{T}} \Var(f(\muv,t)) dt\biggr)\biggr) = O(N_s^{1/2})
    \end{aligned}
\end{equation*}
Notice that
\begin{equation*}
    \int_{\mathcal{T}} \Var(f(\muv,t)) < +\infty
\end{equation*}
because
\begin{equation*}
    \int_{\mathcal{T}} \biggl( \int_{\mathcal{P}} f(\muv,t)^2 d\muv \biggr)^{1/2} dt \le T^{1/2} \biggr(\int_{\mathcal{T} \times \mathcal{P}} f(\muv,t)^2 d(\muv,t)\biggr)^{1/2} < +\infty,
\end{equation*}
since $f \in L^2(\mathcal{P} \times \mathcal{T})$.
Thus, the error we commit in approximating the integral goes to zero upon requiring $N_s,N_t \rightarrow \infty$. Finally, notice that
\begin{equation*}
    \frac{\Delta t |\mathcal{P}|}{N_s} = \frac{T |\mathcal{P}|}{N_{data}} = \frac{|\mathcal{P} \times \mathcal{T}|}{N_{data}}, 
\end{equation*}
which allows us to write
\begin{equation*}
    \mathbb{E}\biggl| \int_{\mathcal{P} \times \mathcal{T}} f(\muv,t) d(\muv,t) - \frac{|\mathcal{P} \times \mathcal{T}|}{N_{data}} \sum_{i=1}^{N_t}\sum_{j=1}^{N_s} f(\muv_j, t_i) \biggr| \le O(N_s^{-1/2} + N_t^{-1}).
\end{equation*}

\subsection{Quasi-optimality of the discrete formulation of the POD decomposition}

We base the following analysis on the results of the $(\mathcal{P} \times \mathcal{T})$-continuous problem proposed in \cite{Manzoni2016}. We first recall that by definition $\mathbf{V}_\infty \in \mathbb{R}^{N_h \times N}$ (where N is the POD dimension) is optimal for the $(\mathcal{P} \times \mathcal{T})$-continuous formulation, that is with respect to the $L^2(\mathcal{P} \times \mathcal{T}; \mathbb{R}^{N_h})$ norm. Formally, we set $\delta,\varepsilon > 0$ and, by assuming $\mathbf{u}(\muv,t) \in L^2(\mathcal{P} \times \mathcal{T},\mathbb{R}^{N_h})$, we define $T:L^2(\mathcal{P} \times \mathcal{T}) \rightarrow \mathbb{R}^{N_h}$ as
    \begin{equation*}
        Tg := \int_{\mathcal{P} \times \mathcal{T}} \mathbf{u}(\muv,t) g(\muv,t) d(\muv,t) \quad \forall g \in L^2(\mathcal{P} \times \mathcal{T}).
    \end{equation*}
The adjoint operator of $T$, namely $T^*$, enjoys the property
\begin{equation*}
    T^*\mathbf{w} = (\mathbf{u}(\muv, t),\mathbf{w})_2 \qquad \forall \mathbf{w} \in \mathbb{R}^{N_h}.
\end{equation*}
Moreover, recall the definition of the (continuous) 
correlation matrix \eqref{eq:correlation_matrix} and denote by  
$(\sigma_{k,\infty}^2,\bm{\zeta}_k)$ its eigenpairs (where $\{\bm{\zeta}_k\}_{k}$ denotes an orthonormal basis).
We thus define the HS-norm of $T$ as
\begin{equation*}
    \|T\|_{HS} = \sqrt{\sum_{k\le \rank(T)} \sigma_{k,\infty}^2}.
\end{equation*}
Setting 
\begin{equation*}
    \bm{\xi}_k = \frac{1}{\sigma_{k,\infty}} T^*\bm{\zeta}_k \qquad \forall k=1,\ldots,N_h,
\end{equation*}
we denote by $T_{N,\infty}$ the rank-$N$ Schmidt approximation, with
\begin{equation*}
    T_{N,\infty} = \sum_{k=1}^N \sigma_{k,\infty} \bm{\zeta}_k (\bm{\xi}_k(\muv,t),\cdot)_{L^2(\mathcal{P} \times \mathcal{T})} = \mathbf{V}_\infty\mathbf{V}_\infty^T T.
\end{equation*}
and by $T_{N} = \mathbf{V} \mathbf{V}^T T$ its approximation by means of the discrete POD formulation. 
\textit{Theorem 6.2} and \textit{Proposition 6.3} in \cite{Manzoni2016} show that the rank-$N$ Schmidt operator and therefore the set of basis $\mathbf{V}_{\infty}$ are optimal with respect to the HS-norm, namely they retain the most variability. Formally:
\begin{equation}
\label{eq:HS-norm-min}
    \begin{aligned}
    &\|T_{N,\infty} - T\|_{HS} =  \min_{B \in \mathcal{B}_N} \|B - T\|_{HS}  \\ &= \min_{\mathbf{W} \in \mathbb{R}^{N_h \times N}: \mathbf{W}^T \mathbf{W} = \bm{I}} \biggl(\int_{\mathcal{P} \times \mathcal{T}} \|\mathbf{u}(\muv, t)- \mathbf{W}\mathbf{W}^T\bm{q}(\muv, t)\|^2 d(\muv,t)\biggr)^{1/2}  \\ &= \sqrt{\sum_{k>N} \sigma_{k,\infty}^2} \\ &= m \mathcal{E}_{POD,\infty},
    \end{aligned}
\end{equation}
where $\mathcal{B}_N = \{ B \in \mathcal{L}(L^2(\mathcal{P} \times \mathcal{T}); \mathbb{R}^{N_h})): \rank(B) \le N \land \|B\|_{HS} < +\infty\}$, being $\mathcal{L}(U)$ the space of linear continuous operators from $U$ to $U$, for $U$ Banach. 
Now, suppose to define $B_N \in \mathcal{B}_N$ which does not attain the minimum in \eqref{eq:HS-norm-min}, thus
\begin{equation}
\label{eq:HS-min-not-att}
     0 < 2\varepsilon_{max} := \|B_{N} - T\|_{HS} - \|T_{N,\infty} - T\|_{HS}.
\end{equation}
By means of the results of Theorem \ref{thm:error_decomposition}, with the same hypotheses, we have that
\begin{equation*}
\begin{aligned}
     \|T_{N,\infty} - T\|_{HS} &\le \|T_{N} - T\|_{HS} \\ &\le m(\mathcal{E}_S + \mathcal{E}_{POD}) \xrightarrow[N_s,N_t \rightarrow\infty]{a.s.} m\mathcal{E}_{POD,\infty} = \|T_{N,\infty} - T\|_{HS}.
\end{aligned}
\end{equation*}
Thus, since a.s. convergence implies convergence in probability, we derive that
\begin{equation*}
\begin{aligned}
     &\forall \delta > 0, \quad \forall 0 < \varepsilon < \varepsilon_{max}, \quad \exists N_s, N_t: \\
     &\hspace{4cm} \mathbb{P} \biggl\{\|T_{N} - T\|_{HS} - \|T_{N,\infty} - T\|_{HS} < \varepsilon \biggr\} > 1 - \delta. 
\end{aligned}
\end{equation*}
Finally, thanks to \eqref{eq:HS-min-not-att}, we have
\begin{equation*}
\begin{aligned}
    &\forall \delta > 0, \quad \exists N_s, N_t: \mathbb{P} \biggl\{\|B_{N} - T\|_{HS} - \|T_{N} - T\|_{HS} > \varepsilon_{max} \biggr\} > 1 - \delta.
\end{aligned}
\end{equation*}

\end{appendices}

\printbibliography

\end{document}